%% file: neurips_2025.tex
\documentclass{article}

\usepackage[letterpaper,margin=1.0in]{geometry}
\usepackage[numbers,sort]{natbib}

 \usepackage{threeparttable}
 \usepackage{amssymb,amsthm}
 \usepackage{multirow}
 \usepackage{thm-restate}
 \usepackage{xcolor}
\usepackage{algpseudocode,algorithm}

\usepackage{graphicx}

\input{math}
\usepackage[utf8]{inputenc} 
\usepackage[T1]{fontenc}    
\usepackage{hyperref}       
\usepackage{url}            
\usepackage{booktabs}       
\usepackage{amsfonts}       
\usepackage{nicefrac}       
\usepackage{microtype}      
\usepackage{xcolor}         

\newif\ifacceptaiedits
\acceptaieditstrue
\ifacceptaiedits
  
  \newcommand{\aireplace}[2]{{\color{black}#1}}
\else
  
  \newcommand{\aireplace}[2]{{\color{blue}#1}}
\fi

\title{Stochastic Non-Smooth Non-Convex Optimization with Decision-Dependent Distributions}

\author{Anonymous Author(s)}

\author{
    Chengchang Liu \qquad Zongqi Wan \qquad Haishan Ye \qquad John C.S. Lui
 }
 \date{}
\begin{document}

\maketitle

\begin{abstract}
We study stochastic zeroth-order optimization with decision-dependent distributions, where the sampling law depends on the current decision and only noisy function values are available. 
For the non-smooth non-convex setting, we establish an explicit convergence guarantee for finding a $(\delta,\epsilon)$-Goldstein stationary point with stochastic zeroth-order oracle (SZO) complexity of $\gO(d^2\delta^{-3}\epsilon^{-3})$. 
In addition, we show that the above complexity can be achieved with single SZO feedback per iteration. 
We further extend the analysis to smooth and Hessian-Lipschitz objectives, obtaining complexities $\gO(d^2\epsilon^{-6})$ and $\gO(d^2\epsilon^{-9/2})$, respectively. In the Hessian-Lipschitz case, this improves the best-known dependence on $\epsilon$ for decision-dependent zeroth-order methods by a factor of $\epsilon^{-1/2}$.

\end{abstract}

\input{main_material}

\section{Experiments}
\label{sec:experiments}

We evaluate our proposed ZO-O2NC on two decision-dependent optimization problems: strategic classification and multi-product pricing. These benchmarks are also considered in \citet{hikima2025nonconvexdd, hikima2025guided}. We changed the loss function of the strategic classification experiment with hinge loss so that its objective becomes non-smooth to fit the setting we considered. The pricing experiment still uses the smooth objective as in \citep{hikima2025guided}.

{We compare both variants of {ZO-O2NC}, namely the two-point estimator in {ZO-O2NC (Option I)} and the one-point residual estimator in {ZO-O2NC (Option II)}, with algorithms including {ZO-CO}, {ZO-SPH}, {ZO-GA}~\citep{hikima2025zeroth}, and {ZO-OG}~\citep{liu2024twotimescale}. 
The methods in \citet{hikima2025zeroth,liu2024twotimescale} follow the gradient descent framework and are equipped with different zeroth-order gradient estimators, i.e. coordinate-wise two-point (CO), sphere two-point (SPH), Gaussian two-point (GA), and one-point zeroth-order (OG) estimators respectively.}
Our algorithm differs from them and follows the online-to-nonconvex conversion framework.
All experiments were run on a machine equipped with an Intel(R) Xeon(R) Platinum 8457C CPU with 48 processors and 95.7 GB of RAM.

\subsection{Strategic Classification}

\begin{figure*}[t]
\centering
\begin{tabular}{@{}cc@{}}
\includegraphics[width=0.46\textwidth]{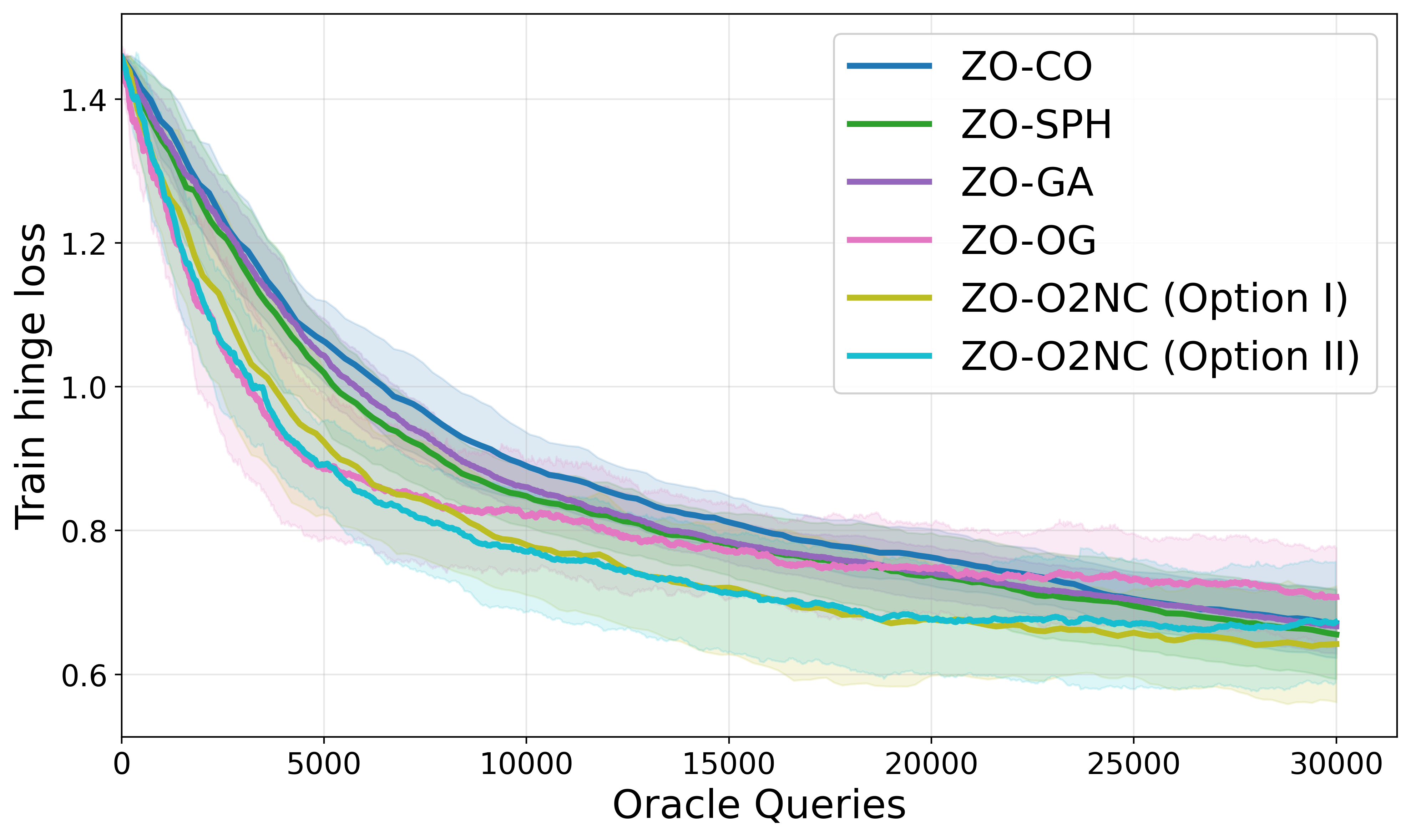} &
\includegraphics[width=0.46\textwidth]{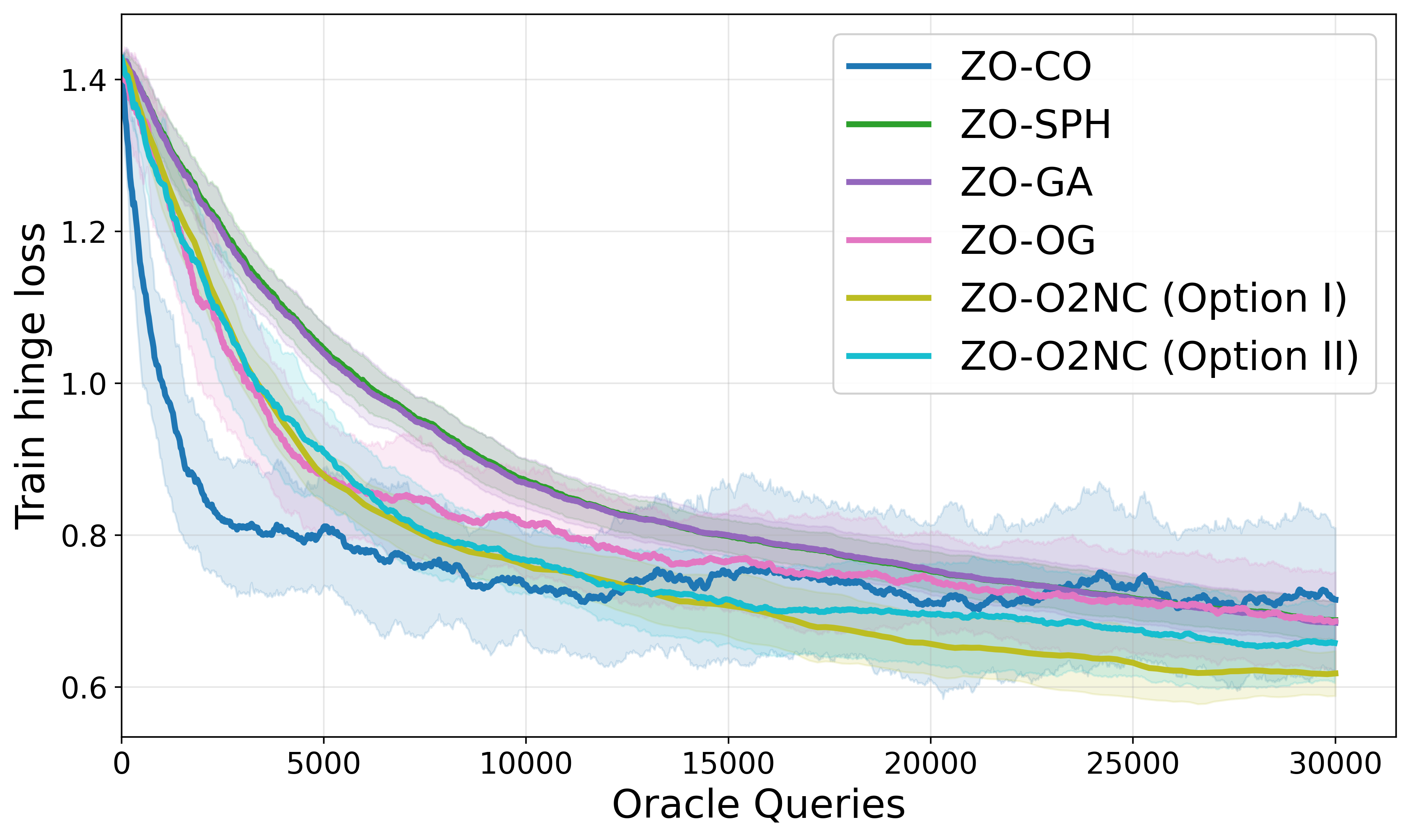} \\
\small (a) Random dataset with seed 101 & \small (b) Random dataset with seed 102 \\
\includegraphics[width=0.46\textwidth]{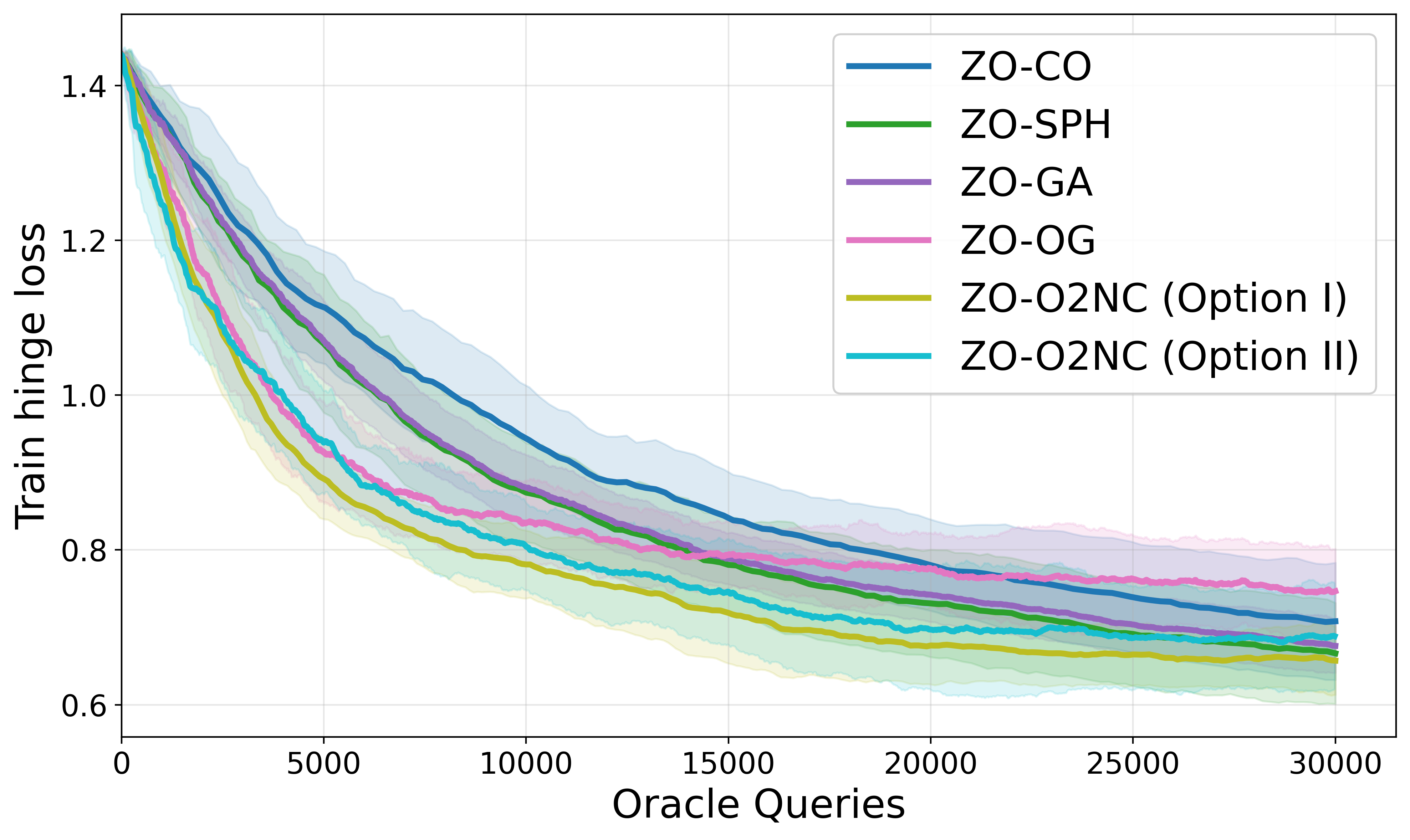} &
\includegraphics[width=0.46\textwidth]{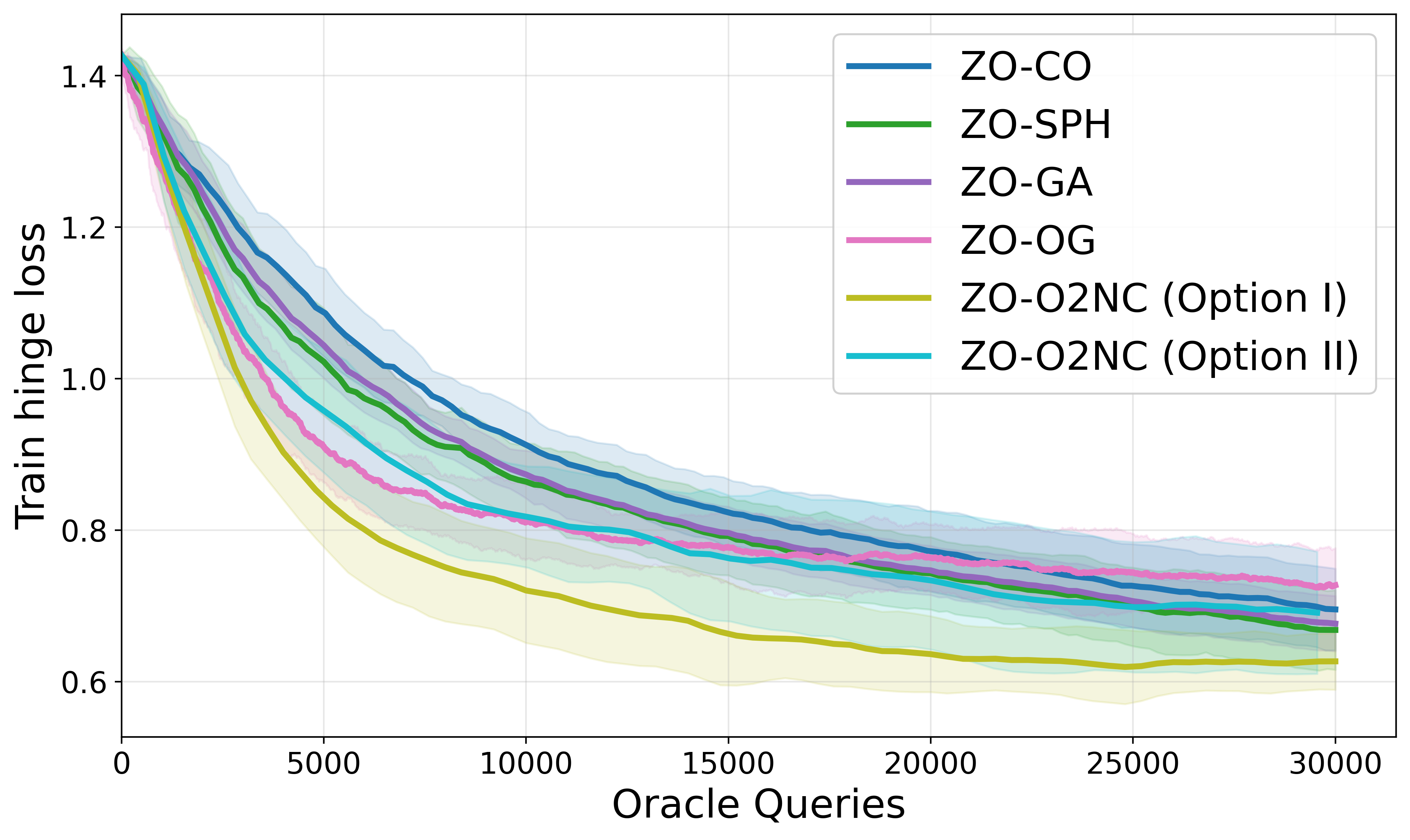} \\
\small (c) Random dataset with seed 103 & \small (d) Random dataset with seed 104
\end{tabular}
\caption{Strategic-classification train loss over oracle queries. }
\label{fig:strategic-train-loss}
\end{figure*}

Strategic classification models a binary decision system in which individuals may change their feature vectors after observing the classifier. For instance, when a bank decides whether to approve a loan, the applicant may falsify their credit history. In the experiment, the learner optimizes the classifier parameter $\vx=(\vx_{\rm feat},b)\in\sR^{12}$, while each data point has a mutable feature vector $\vw\in\sR^{11}$ and label $y\in\{-1,+1\}$.  Unlike ordinary supervised learning, the individual may modify their features after seeing the classifier in order to be classified to the favorable class $+1$. Thus the learner's objective is not the hinge risk under a fixed data distribution, but the hinge risk after the classifier-induced response map $\vw\mapsto \tilde{\vw}(\vx;\vw)$ has changed the observed features, which makes the training distribution depend on $\vx$.  We evaluate the post-response prediction with hinge loss, which makes the objective nonsmooth in addition to being decision-dependent; one stochastic oracle query samples one training example and returns its post-response hinge loss.
The training objective is
\[f(\vx)
    =
    \E_{(\vw_{\rm true},y)\sim D_{\rm train}}
    \left[
    \max\left\{0,1-y\big(\vx_{\rm feat}^{\top}\tilde{\vw}(\vx;\vw_{\rm true})+b\big)\right\}
    \right]
    \]
The full response rule and objective are given in Appendix~\ref{app:strategic-details}.

We use a hyperparameter grid search to tune the hyperparameters for each implemented algorithm, then report
their train loss and test accuracy in Figure~\ref{fig:strategic-train-loss} and Table~\ref{tab:strategic-by-split}.
The experiment dataset is from \citet{USL2019}, which contains the credit payment feature of $30000$ individuals. 
For each random seed in 101--104, we construct one fixed real-data instance by randomly sampling 13,272 records from the full dataset, then selecting a stratified test set of 1,000 records and using the remaining 12,272 records as the training set. The total query budget is fixed to $30000$, other details can be found in the Appendix~\ref{app:strategic-details}. 

The strategic-classification experiment demonstrates that two-point ZO-O2NC variant outperforms the baseline methods. 
ZO-O2NC with Option I obtains the lowest train loss on every dataset.  
The performance of single-point ZO-O2NC is also better than the priot single-point feedback method (ZO-OG).

\subsection{Multi-Product Pricing}

The pricing benchmark models a seller who chooses prices for multiple substitutable products while demand reacts to the chosen prices.  The decision variable is a price vector $\vx\in\sR^{30}$, one coordinate per product.  For each weekly instance, the reference price vector $\vtheta$ is built from real KSP-SP average prices \footnote{The price data are available at \url{https://www.ksp-sp.com/}.} and normalized into $[0.1,0.9]$; the remaining demand and cost parameters follow the same setup in \citet{hikima2025nonconvexdd}. Given $\vx$, each of $m=120$ buyers independently chooses either one product or the outside option according to a multinomial-logit (MNL) model with parameters detailed in the Appendix~\ref{app:pricing-details}.
Lowering a product's price relative to its reference price increases its purchase probability, while the outside option captures buyers who do not purchase any product.  A demand realization $\vxi=(\xi_1,\ldots,\xi_{30})$ is obtained by drawing 120 multinomial choices and counting how many buyers select each product.

For a realized demand vector $\vxi$, the seller's revenue is $\sum_i x_i\xi_i$ and the production cost is a separable piecewise-linear function $c(\vxi)=\sum_i c_i(\xi_i)$.  We minimize negative profit, so the stochastic oracle returns
\[
F(\vx;\vxi)=-\sum_{i=1}^{30}x_i\xi_i+c(\vxi),
\] 
with $\vxi\sim D(\vx)$ determined by the price-dependent MNL demand model. The objective of this experiment is $f(\vx) = \E_{\vxi\sim D(\vx)} [F(\vx;\vxi)]$.
The details of the problem setting can be found in Appendix~\ref{app:pricing-details}.

\begin{table*}[t]
\caption{Pricing final objective on real weekly instances.  
Each cell reports mean $\pm$ standard deviation over $10$ held-out seeds.}
\label{tab:pricing-objective}
\centering
\small
\resizebox{\textwidth}{!}{
\begin{tabular}{lcccccc}
\addlinespace
\toprule
Week & ZO-CO & ZO-SPH & ZO-GA & ZO-OG & ZO-O2NC (Option I) & ZO-O2NC (Option II) \\
\midrule
202610 & $-33.9138 \pm 0.3724$ & $-35.1343 \pm 0.1417$ & $-35.0369 \pm 0.1188$ & $-29.5875 \pm 1.4826$ & $\mathbf{-35.4249 \pm 0.0880}$ & $-35.1458 \pm 0.1819$ \\
202611 & $-22.2947 \pm 0.3391$ & $-22.3252 \pm 0.2732$ & $-22.3862 \pm 0.1218$ & $-21.9161 \pm 0.4503$ & $\mathbf{-22.8352 \pm 0.1006}$ & $-22.8308 \pm 0.1065$ \\
202612 & $-21.2159 \pm 0.3512$ & $-21.5353 \pm 0.1885$ & $-21.4584 \pm 0.1547$ & $-20.8643 \pm 0.2986$ & $\mathbf{-21.8371 \pm 0.1387}$ & $-21.6164 \pm 0.0867$ \\
202613 & $-18.3799 \pm 0.3636$ & $-18.5372 \pm 0.2821$ & $-18.5938 \pm 0.2408$ & $-17.8047 \pm 0.4510$ & $\mathbf{-18.8398 \pm 0.2717}$ & $-18.4587 \pm 0.3218$ \\
\bottomrule
\end{tabular}
}
\end{table*}

Table~\ref{tab:pricing-objective} reports the final objective on the four real weekly instances. Since the objective is negative profit, smaller values correspond to better pricing decisions. ZO-O2NC with Option I achieves the best final objective on all four weeks. ZO-O2NC with Option II is also competitive: it outperforms all baseline methods on three of the four weeks and remains competitive on the remaining week.

\section{Conclusions and Future Works}
\label{sec:conclu}
We studied stochastic zeroth-order optimization with decision-dependent distributions in the non-smooth non-convex setting. 
We established an explicit convergence guaranty for finding a $(\delta,\epsilon)$-Goldstein stationary point with an SZO complexity of $\gO(d^2\delta^{-3}\epsilon^{-3})$. 
In addition, we showed that such an SZO complexity can be achieved by single-point zeroth-order estimator.
We also extended the analysis to smooth and Hessian-Lipschitz objectives. 
In the smooth case, our framework achieves the state-of-the-art complexity $\gO(d^2\epsilon^{-6})$. 
In the Hessian-Lipschitz case, it improves the best known result from $\gO(d^2\epsilon^{-5})$ to $\gO(d^2\epsilon^{-9/2})$. 
These results show that the proposed ZO-O2NC method provides a unified guarantee to handle both non-smooth and smooth decision-dependent problems under zeroth-order feedback.

For the future work, it would be interesting to establish lower bounds for decision-dependent zeroth-order optimization, to understand whether the rates obtained here are optimal.
It is also possible to consider stochastic composite or constraint non-smooth non-convex problems with decision dependent distribution~\cite{liu2024gradient,liu2024zeroth}.

\bibliographystyle{plainnat}
\bibliography{sample}
\appendix

\input{Additional_related}

\input{appendix}

\section{Experiment Details}
\label{app:experiment-details}

This appendix gives the full experimental protocol used for the results in Section~\ref{sec:experiments}.  All methods are compared under the same stochastic-oracle query budget on each fixed problem instance.  The baselines are denoted by ZO-CO, ZO-SPH, ZO-GA, and ZO-OG, corresponding respectively to the coordinate-wise two-point, sphere two-point, Gaussian two-point, and one-point zeroth-order estimators.  We report ZO-O2NC (Option I) and ZO-O2NC (Option II) as separate methods.  Hyperparameters are tuned separately for each base instance using development randomness, and the reported numbers are computed only on held-out randomness.
\subsection{Strategic Classification}
\label{app:strategic-details}

\paragraph{Problem setting.}
The strategic-classification experiment is built from the processed credit dataset used in \citet{USL2019}.  We use four fixed real-data instances indexed by seeds $101,102,103,104$.  For each seed, we sample 13,272 records, reserve a stratified test set of 1,000 records, and use the remaining 12,272 records as the training set.  The latent feature vector $\vw$ contains 11 mutable credit features. These features are standardized using the training-set mean and standard deviation for the corresponding split.  The decision variable is a linear classifier $\vx=(\vx_{\rm feat},b)\in\sR^{12}$, where $\vx_{\rm feat}\in\sR^{11}$ and $b\in\sR$.

\begin{table}[t]
\caption{Strategic-classification results on each real credit split.  Each cell reports the mean $\pm$ standard deviation.  Lower is better for train loss and test loss; higher is better for test accuracy and test AUC. The best method in each row is bolded.}
\label{tab:strategic-by-split}
\centering
\scriptsize
\setlength{\tabcolsep}{3pt}
\textbf{Split 101}
\resizebox{\textwidth}{!}{
\begin{tabular}{lcccccc}
\toprule
Metric & ZO-CO & ZO-SPH & ZO-GA & ZO-OG & \aireplace{ZO-O2NC (Option I)}{ZO-O2NC (ours)} & \aireplace{ZO-O2NC (Option II)}{} \\
\midrule
Train loss & $0.6711 \pm 0.0482$ & $0.6555 \pm 0.0618$ & $0.6669 \pm 0.0379$ & $0.7074 \pm 0.0687$ & $\mathbf{0.6422 \pm 0.0807}$ & \aireplace{$0.6727 \pm 0.0843$}{} \\
Test accuracy & $\mathbf{0.7705 \pm 0.0128}$ & $0.7665 \pm 0.0196$ & $0.7643 \pm 0.0231$ & $0.7552 \pm 0.0336$ & $0.7634 \pm 0.0205$ & \aireplace{$0.7675 \pm 0.0260$}{} \\
Test loss & $0.6717 \pm 0.0482$ & $0.6615 \pm 0.0596$ & $0.6683 \pm 0.0348$ & $0.7172 \pm 0.0747$ & $\mathbf{0.6448 \pm 0.0758}$ & \aireplace{$0.6824 \pm 0.0823$}{} \\
Test AUC & $0.6725 \pm 0.0248$ & $0.6724 \pm 0.0198$ & $0.6761 \pm 0.0163$ & \aireplace{$0.6763 \pm 0.0439$}{$\mathbf{0.6763 \pm 0.0439}$} & $0.6761 \pm 0.0276$ & \aireplace{$\mathbf{0.6860 \pm 0.0270}$}{} \\
\bottomrule
\end{tabular}
}

\vspace{0.6em}
\textbf{Split 102}
\resizebox{\textwidth}{!}{
\begin{tabular}{lcccccc}
\toprule
Metric & ZO-CO & ZO-SPH & ZO-GA & ZO-OG & \aireplace{ZO-O2NC (Option I)}{ZO-O2NC (ours)} & \aireplace{ZO-O2NC (Option II)}{} \\
\midrule
Train loss & $0.7152 \pm 0.0946$ & $0.6881 \pm 0.0245$ & $0.6848 \pm 0.0274$ & $0.6869 \pm 0.0643$ & $\mathbf{0.6181 \pm 0.0296}$ & \aireplace{$0.6584 \pm 0.0527$}{} \\
Test accuracy & $0.7638 \pm 0.0247$ & \aireplace{$0.7625 \pm 0.0151$}{$0.7624 \pm 0.0151$} & $0.7613 \pm 0.0203$ & $0.7747 \pm 0.0204$ & $\mathbf{0.7838 \pm 0.0077}$ & \aireplace{$0.7756 \pm 0.0122$}{} \\
Test loss & $0.6838 \pm 0.0994$ & $0.6820 \pm 0.0227$ & $0.6795 \pm 0.0245$ & $0.6697 \pm 0.0692$ & $\mathbf{0.6115 \pm 0.0298}$ & \aireplace{$0.6496 \pm 0.0535$}{} \\
Test AUC & $0.6941 \pm 0.0479$ & $0.6796 \pm 0.0172$ & $0.6809 \pm 0.0158$ & $0.7090 \pm 0.0252$ & $\mathbf{0.7138 \pm 0.0233}$ & \aireplace{$0.7001 \pm 0.0302$}{} \\
\bottomrule
\end{tabular}
}

\vspace{0.6em}
\textbf{Split 103}
\resizebox{\textwidth}{!}{
\begin{tabular}{lcccccc}
\toprule
Metric & ZO-CO & ZO-SPH & ZO-GA & ZO-OG & \aireplace{ZO-O2NC (Option I)}{ZO-O2NC (ours)} & \aireplace{ZO-O2NC (Option II)}{} \\
\midrule
Train loss & $0.7078 \pm 0.0757$ & $0.6664 \pm 0.0652$ & $0.6761 \pm 0.0346$ & $0.7467 \pm 0.0547$ & $\mathbf{0.6570 \pm 0.0440}$ & \aireplace{$0.6886 \pm 0.0685$}{} \\
Test accuracy & $0.7585 \pm 0.0190$ & $0.7663 \pm 0.0201$ & $0.7691 \pm 0.0218$ & $0.7708 \pm 0.0258$ & \aireplace{$\mathbf{0.7776 \pm 0.0127}$}{$\mathbf{0.7777 \pm 0.0127}$} & \aireplace{$0.7707 \pm 0.0136$}{} \\
Test loss & $0.7306 \pm 0.0873$ & $0.6803 \pm 0.0821$ & $0.6892 \pm 0.0462$ & $0.7470 \pm 0.0560$ & $\mathbf{0.6453 \pm 0.0488}$ & \aireplace{$0.6829 \pm 0.0818$}{} \\
Test AUC & $0.6499 \pm 0.0341$ & $0.6660 \pm 0.0492$ & $0.6691 \pm 0.0308$ & $0.6875 \pm 0.0286$ & $\mathbf{0.7083 \pm 0.0208}$ & \aireplace{$0.6873 \pm 0.0443$}{} \\
\bottomrule
\end{tabular}
}

\vspace{0.6em}
\textbf{Split 104}
\resizebox{\textwidth}{!}{
\begin{tabular}{lcccccc}
\toprule
Metric & ZO-CO & ZO-SPH & ZO-GA & ZO-OG & \aireplace{ZO-O2NC (Option I)}{ZO-O2NC (ours)} & \aireplace{ZO-O2NC (Option II)}{} \\
\midrule
Train loss & $0.6952 \pm 0.0539$ & $0.6681 \pm 0.0526$ & $0.6764 \pm 0.0367$ & $0.7272 \pm 0.0490$ & $\mathbf{0.6266 \pm 0.0375}$ & \aireplace{$0.6909 \pm 0.0806$}{} \\
Test accuracy & $0.7617 \pm 0.0208$ & $0.7719 \pm 0.0181$ & $0.7749 \pm 0.0107$ & $0.7745 \pm 0.0163$ & $\mathbf{0.7823 \pm 0.0104}$ & \aireplace{$0.7717 \pm 0.0202$}{} \\
Test loss & $0.7025 \pm 0.0516$ & $0.6738 \pm 0.0509$ & $0.6836 \pm 0.0345$ & $0.7373 \pm 0.0548$ & $\mathbf{0.6374 \pm 0.0456}$ & \aireplace{$0.6997 \pm 0.0810$}{} \\
Test AUC & $0.6592 \pm 0.0238$ & $0.6731 \pm 0.0273$ & $0.6762 \pm 0.0165$ & $0.6878 \pm 0.0233$ & $\mathbf{0.6991 \pm 0.0246}$ & \aireplace{$0.6778 \pm 0.0416$}{} \\
\bottomrule
\end{tabular}
}
\end{table}

Each example consists of a latent feature vector $\vw_{\rm true}\in\sR^{11}$ and a label $y\in\{-1,+1\}$.  Given $\vx$, the individual may strategically alter its observable features according to the best-response problem
\[
    \tilde{\vw}(\vx;\vw_{\rm true})
    \in \arg\max_{\vw\in\sR^{11}}
    \left\{ r(\vw;\vx)-\|\vw-\vw_{\rm true}\|_2^2 \right\},
\]
where the approval reward is
\[
    r(\vw;\vx)=
    \begin{cases}
    \tau, & \vx_{\rm feat}^{\top}\vw+b\ge 0,\\
    0, & \vx_{\rm feat}^{\top}\vw+b<0,
    \end{cases}
    \qquad \tau=2.
\]
Equivalently, an already approved agent keeps its original features.  An unapproved agent moves to the Euclidean projection onto the decision boundary only if the squared movement cost is at most $\tau$; otherwise it does not move.  This response rule makes the data distribution depend on $\vx$.
In closed form, if $s=\vx_{\rm feat}^{\top}\vw_{\rm true}+b<0$ and $\|\vx_{\rm feat}\|_2>0$, the closest point on the decision boundary is
\[
    \vw_{\rm proj}
    =
    \vw_{\rm true}
    -
    \frac{s}{\|\vx_{\rm feat}\|_2^2}\vx_{\rm feat}.
\]
The individual chooses $\tilde{\vw}=\vw_{\rm proj}$ when $\|\vw_{\rm proj}-\vw_{\rm true}\|_2^2\le \tau$ and otherwise chooses $\tilde{\vw}=\vw_{\rm true}$.  If $s\ge0$ or $\|\vx_{\rm feat}\|_2=0$, then $\tilde{\vw}=\vw_{\rm true}$.

Our stochastic objective is the training hinge loss after strategic response:
\[
    f(\vx)
    =
    \E_{(\vw_{\rm true},y)\sim D_{\rm train}}
    \left[
    \max\left\{0,1-y\big(\vx_{\rm feat}^{\top}\tilde{\vw}(\vx;\vw_{\rm true})+b\big)\right\}
    \right].
\]
One stochastic oracle query samples one training index uniformly and returns the corresponding hinge loss at the current classifier parameter.  We report final train loss, test accuracy, test loss, and test AUC.  This experiment differs from \citet{hikima2025nonconvexdd} only in the loss: their experiment uses a smooth classification loss, while our experiment uses hinge loss to create a nonsmooth decision-dependent objective.

\paragraph{Hyperparameter grids.}
For each credit split, all methods start from $\vone\in\sR^{12}$ and use a query budget of $Q=30{,}000$.  We use 5 development runs to choose hyperparameters with a grid search method and 20 held-out runs to report the final metrics.  For ZO-SPH and ZO-GA, $N$ denotes the number of random directions and the mini-batch parameter is fixed to $m=1$; for ZO-CO and ZO-OG, $m$ denotes the mini-batch parameter and $N$ is fixed to 1. 
The hyperparameter grid is $\eta\in\{0.01,0.1,1.0\}$, $\mu\in\{0.1,0.5,1.0,2.0,4.0\}$, with $N\in\{1,10,100\}$ for ZO-SPH/ZO-GA and $m\in\{1,10,100\}$ for ZO-CO/ZO-OG. 
For both ZO-O2NC variants, we tune $\delta\in\{0.5,1.0,2.0,5.0\}$, $m\in\{1,5,10,20,50\}$, $T\in\{5,10,20\}$, and $\eta_{\rm online}\in\{10^{-4},10^{-3},10^{-2}\}$; the inner radius is $D=\delta/T$. The selected configuration is the one with the smallest mean final training hinge loss on the development runs.

\paragraph{Detailed results.}
Table~\ref{tab:strategic-by-split} reports the per-split strategic-classification results for seeds 101--104, with ZO-O2NC (Option I) and ZO-O2NC (Option II) shown as separate methods.  For each split and method, we report final train hinge loss, test accuracy, test hinge loss, and test AUC as the mean $\pm$ standard deviation over 20 held-out runs.

\subsection{Multi-product Pricing}
\label{app:pricing-details}

\paragraph{Problem setting.}
The pricing experiment uses the multi-product pricing model of \citet{hikima2025nonconvexdd}.  There are $n=30$ substitutable products and $m_{\rm buyer}=120$ potential buyers.  The decision variable is the price vector $\vx=(x_1,\ldots,x_n)\in\sR^n$, so the optimization dimension is $d=30$.  We use four fixed weekly instances, corresponding to KSP-SP weeks 202610, 202611, 202612, and 202613.  For each weekly instance, we take the top 30 recorded average confectionery prices and min-max normalize them into $[0.1,0.9]$ to obtain the reference price vector $\vtheta=(\theta_1,\ldots,\theta_n)$.

Given a queried price vector $\vx$, each buyer independently either purchases one product or chooses the outside option.  Conditional on $\vx$, the product probabilities follow a multinomial-logit model:
\[
    p_i(\vx)=
    \frac{\exp\{\gamma_i(\theta_i-x_i)\}}
    {a_0+\sum_{j=1}^n\exp\{\gamma_j(\theta_j-x_j)\}},
    \qquad i=1,\ldots,n,
\]
and the no-purchase probability is
\[
    p_0(\vx)=
    \frac{a_0}
    {a_0+\sum_{j=1}^n\exp\{\gamma_j(\theta_j-x_j)\}}.
\]
We set $a_0=0.1n=3$ and $\gamma_i=2\pi/(\sqrt{6}\theta_i)$.  The demand vector $\vxi=(\xi_1,\ldots,\xi_n)$ is obtained by drawing $m_{\rm buyer}$ multinomial choices with probabilities $(p_1(\vx),\ldots,p_n(\vx),p_0(\vx))$ and retaining the product counts.  Thus $\sum_i \xi_i\le m_{\rm buyer}$.

For a realized demand vector $\vxi$, the seller's revenue is
\[
    s(\vx,\vxi)=\sum_{i=1}^n x_i\xi_i,
\]
and the production cost is separable, $c(\vxi)=\sum_{i=1}^n c_i(\xi_i)$.  As in \citet{hikima2025nonconvexdd}, each product has thresholds $l_i=0.5m_{\rm buyer}/n=2$ and $u_i=1.5m_{\rm buyer}/n=6$.  The per-product cost is piecewise linear, with a larger marginal cost in the high-demand regime:
\[
c_i(z)=
\begin{cases}
2w_i z, & z\le l_i,\\
w_i(z-l_i)+2w_i l_i, & l_i<z\le u_i,\\
3w_i(z-u_i)+w_i(u_i-l_i)+2w_i l_i, & z>u_i,
\end{cases}
\]
where $w_i=\rho_i\theta_i$ and $\rho_i\sim{\rm Unif}[0.25,0.5]$ is drawn once and fixed within the instance.  The induced demand distribution depends on $\vx$, and the stochastic oracle returns the negative profit
\[
    F(\vx;\vxi)=-s(\vx,\vxi)+c(\vxi),
\]
where $\vxi\sim D(\vx)$ is the multinomial demand vector generated by the MNL model.  The objective is
\[
    f(\vx)=\E_{\vxi\sim D(\vx)}[F(\vx;\vxi)].
\]
For final evaluation and plots, we estimate $f(\vx)$ with an independent 1000-sample Monte Carlo average at the reported point.  These evaluation samples are not counted as optimization oracle queries.

\paragraph{Hyperparameter grids.} For each weekly pricing instance, all methods start from $0.5\vone\in\sR^{30}$ and use a query budget of $Q=5{,}000$.  We use 3 development runs to choose hyperparameters and 10 held-out runs to report the final objective.  As in the strategic-classification benchmark, $N$ is tuned for ZO-SPH/ZO-GA with $m=1$, while $m$ is tuned for ZO-CO/ZO-OG with $N=1$.  The hyperparameter grid for algorithms in \citep{hikima2025nonconvexdd} is $\eta\in\{10^{-5},10^{-4},10^{-3},10^{-2}\}$ and $\mu\in\{0.004,0.02,0.1,0.5,2.5\}$, with $N\in\{1,10,100\}$ for ZO-SPH/ZO-GA and $m\in\{1,10,100\}$ for ZO-CO/ZO-OG. 
For both ZO-O2NC variants, we tune $\delta\in\{0.01,0.05,0.1,0.5,1.0\}$, $m\in\{1,5,10,20\}$, $T\in\{10,20,50\}$, and $\eta_{\rm online}\in\{10^{-4},10^{-3},10^{-2}\}$. The selected configuration is the one with the smallest mean final objective on the development runs.

\end{document}

%% file: math.tex
\usepackage{amsmath,amsfonts,bm,amssymb}

\def\eqref#1{(\ref{#1})}

\def\1{\bm{1}}

\def\vzero{{\bm{0}}}
\def\vone{{\bm{1}}}

\def\vtheta{{\bm{\theta}}}
\def\vxi{{\bm{\xi}}}
\def\vDelta{{\bm{\Delta}}}

\def\vg{{\bm{g}}}

\def\vq{{\bm{q}}}
\def\vr{{\bm{r}}}

\def\vu{{\bm{u}}}
\def\vv{{\bm{v}}}
\def\vw{{\bm{w}}}
\def\vx{{\bm{x}}}
\def\vy{{\bm{y}}}
\def\vz{{\bm{z}}}

\DeclareMathAlphabet{\mathsfit}{\encodingdefault}{\sfdefault}{m}{sl}
\SetMathAlphabet{\mathsfit}{bold}{\encodingdefault}{\sfdefault}{bx}{n}

\def\gO{{\mathcal{O}}}

\def\sB{{\mathbb{B}}}

\def\sN{{\mathbb{N}}}

\def\sR{{\mathbb{R}}}
\def\sS{{\mathbb{S}}}

\newcommand{\E}{\mathbb{E}}

\newcommand{\Var}{\mathrm{Var}}

\newtheorem{thm}{Theorem}[section]
\newtheorem{dfn}{Definition}[section]

\newtheorem{lem}{Lemma}[section]
\newtheorem{asm}{Assumption}[section]
\newtheorem{rmk}{Remark}[section]

\newtheorem{prop}{Proposition}[section]

\newcommand{\norm}[1]{\left\| #1 \right\|}

\def\1{\bf{1}}

\def\inner#1#2{\langle #1, #2 \rangle}

\def\RB{{\mathbb R}}

%% file: main_material.tex
\section{Introduction}

This paper studies the stochastic optimization problem
\begin{align}
    \label{prob:main}
    \min_{\vx \in \sR^d} f(\vx) \triangleq \E_{\xi \sim \Xi(\vx)}[F(\vx;\xi)],
\end{align}
where the sampling distribution $\Xi(\vx)$ depends on the decision $\vx$, and only stochastic function values are available. We allow the expected objective $f$ to be non-smooth and non-convex.
Decision-dependent models arise naturally when deployed decisions change the future data-generating process~\cite{berahas2022theoretical,perdomo2020performative,mendler2020stochastic,chen2023performative, drusvyatskiy2023stochastic}, as in price optimization~\cite{ray2022decision} and strategic classification \citep{hikima2025zeroth,levanon2021strategic}. 
In these settings, each stochastic sample is observed only after querying the system at the current decision.

The main difficulty in solving \eqref{prob:main} is the \emph{distribution drift} caused by the dependence of $\Xi(\vx)$. If $\Xi(\vx)$ admits a density $p_{\vx}$ with respect to a reference measure, then
\begin{align*}
    \nabla f(\vx) = \E_{\xi \sim \Xi(\vx)}[\nabla_{\vx} F(\vx;\xi)] + \int F(\vx;\xi)\nabla_{\vx} p_{\vx}(\xi)\,d\xi.
\end{align*}
The second term depends on the derivative of the unknown sampling law and cannot be recovered from stochastic first-order information alone.
Therefore, it is difficult, and in general even impossible, to construct a reliable estimator of $\nabla f(\vx)$ from the stochastic first-order oracle $\nabla_{\vx} F(\vx;\xi)$.
This leads us to consider stochastic zeroth-order methods, where we assume access to the following stochastic zeroth-order oracle (SZO).
\begin{asm}\label{asm:estimator}
We assume the stochastic zeroth-order oracle $F(\vx;\xi)$ satisfies
\begin{align*}
    \E_{\xi \sim \Xi(\vx)}[F(\vx;\xi)] = f(\vx)~~\text{and}~~
    \E_{\xi \sim \Xi(\vx)} \big[ |F(\vx;\xi)-f(\vx)|^2 \big] \le \sigma^2~~~\text{for all}~~\vx\in\RB^d.
\end{align*}
\end{asm}

Several zeroth-order methods have been proposed to find an $\epsilon$-stationary point of \eqref{prob:main}.
\citet{liu2024twotimescale} establish a rate of $\widetilde{\gO}(\ell^6 d^2\epsilon^{-6})$, where $\ell \triangleq \sup_{\vx,\xi}|F(\vx;\xi)|$, which can be very large.
\citet{hikima2025nonconvexdd} remove the bounded-value assumption on $|F(\vx;\xi)|$ and prove an $\gO(d^{9/2}\epsilon^{-6})$ SZO complexity.
This rate is improved to $\gO(d^{4}\epsilon^{-6})$ by \citet{hikima2025guided}, and \citet{hikima2025zeroth} further obtain $\gO(d^2\epsilon^{-6})$.
\citet{hikima2025zeroth} also show that the SZO complexity can be reduced to $\gO(d^2\epsilon^{-5})$ under Hessian Lipschitzness. 
However, all of the above decision-dependent results assume that the objective function is smooth.
It remains unclear whether one can provide an explicit convergence guarantee for stochastic non-smooth non-convex problems.
This motivates us to ask the following question.

\textit{
Can one design efficient zeroth-order methods for stochastic non-smooth non-convex optimization with decision-dependent distributions?
}

For non-smooth non-convex problems, it is natural to seek a $(\delta,\epsilon)$-Goldstein stationary point, following \citet{goldstein1977optimization} and \citet{zhang2020complexity}.
In the decision-independent setting, where the sampling distribution is fixed (i.e., $\Xi(\vx)\equiv\Xi$), many zeroth-order methods have been developed for finding an approximate Goldstein stationary point.
\citet{lin2022gradient} transform the problem into finding an $\epsilon$-stationary point of the surrogate function $f_{\delta}$ (see Definition~\ref{dfn:f-delta}) and obtain an $\mathcal{O}(d^{3/2}\delta^{-1}\epsilon^{-4})$ SZO complexity.
This complexity is further improved to $\mathcal{O}(d^{3/2}\delta^{-1}\epsilon^{-3})$ by variance reduction~\cite{chen2023faster} and to $\mathcal{O}(d\delta^{-1}\epsilon^{-3})$ by the online-to-non-convex conversion (O2NC) framework~\cite{kornowski2024algorithm,cutkosky2023optimal}.
However, all these results assume that $F(\vx;\xi)$ is Lipschitz continuous, which does not generally hold in the decision-dependent setting considered here.

Moreover, the above methods for decision-independent non-smooth non-convex optimization require at least two-point feedback per iteration to obtain a good approximation of $\nabla f_{\delta}(\vx)$.
This can be restrictive in settings where only one function evaluation is available at each iteration~\cite{flaxman2005online,shamir2013complexity}.
In addition, existing one-point methods for smooth non-convex optimization with decision-dependent distributions often have strong dependence on either the magnitude of the SZO values~\cite{hikima2025nonconvexdd} or the dimension~\cite{liu2024twotimescale}, due to the large variance of one-point gradient estimators~\cite{larson2019derivative}.
This motivates us to further consider the following question.

\textit{
Can one-point-feedback zeroth-order methods achieve comparable SZO complexity for (non)-smooth non-convex problems?
}

In this paper, we answer the above two questions by studying how the two-point and one-point estimators approximate $\nabla f_{\delta}(\vx)$, and by showing how they can be combined with the O2NC framework to solve non-smooth non-convex problems with decision-dependent distributions.
We summarize our contributions below and compare our methods with prior work in Table~\ref{tbl:compare}.

\begin{table}[t]
\centering
\caption{Comparison of zeroth-order methods for stochastic decision-dependent non-convex problems. ``NS-NC'', ``S-NC'', and ``HL-NC'' denote the non-smooth non-convex, smooth non-convex, and Hessian-Lipschitz non-convex settings, respectively.
For the NS-NC setting, we report the SZO complexity for finding a $(\delta,\epsilon)$-Goldstein stationary point. For the other settings, we report the SZO complexity for finding an $\epsilon$-stationary point.}
\label{tbl:compare}
\begin{threeparttable}
\centering
\begin{tabular}{ccccc}
\toprule
Method & NS-NC & S-NC & HL-NC & SZO Feedback/Iter. \\
\midrule
\midrule
\citet{liu2024twotimescale} & -- & $\widetilde{\gO}(d^2\epsilon^{-6})$$^\dag$ & -- & $1$ \\\addlinespace
 \citet{hikima2025nonconvexdd} & -- & $\gO(d^{9/2}\epsilon^{-6})$ & -- & $1$ \\\addlinespace
\citet{hikima2025guided} & -- & $\gO(d^{4}\epsilon^{-6})$ & -- & $2$ \\\addlinespace
\citet{hikima2025zeroth} & -- & $\gO(d^3\epsilon^{-6})$ & $\gO(d^3\epsilon^{-5})$ & $2d$ \\\addlinespace
\citet{hikima2025zeroth} & $\gO(d^{5/2}\delta^{-3}\epsilon^{-4})$$^*$ & $\gO(d^2\epsilon^{-6})$ & $\gO(d^2\epsilon^{-5})$ & $2N$$^\ddag$ \\\addlinespace
\hline
\addlinespace
Algorithm~\ref{alg:O2NC-detail} (Option~I) & $\gO(d^2\delta^{-3}\epsilon^{-3})$ & $\gO(d^2\epsilon^{-6})$ & $\gO(d^2\epsilon^{-9/2})$ & $2$ \\\addlinespace
Algorithm~\ref{alg:O2NC-detail} (Option~II) & $\gO(d^2\delta^{-3}\epsilon^{-3})$ & $\gO(d^2\epsilon^{-6})$ & $\gO(d^2\epsilon^{-9/2})$ & $1$ \\
\bottomrule
\end{tabular}
\begin{tablenotes}[flushleft]
\footnotesize
\item[$^\dag$] This result requires additional assumption that $|F(\vx;\xi)|\leq l$ and the complexity omits the $l^6$ dependency.
\item[$^*$] \citet{hikima2025zeroth} did \textbf{not} provide a convergence guarantee for the NS-NC setting. We provide the convergence guarantee in Appendix~\ref{app:sgd-baseline}.
\item[$^\ddag$] This requires to set $N = \mathcal{O}(d^2\epsilon^{-4})$.
\end{tablenotes}
\end{threeparttable}
\end{table}

\begin{itemize}
    \item In Section~\ref{sec:zo-estimators}, we study how the two-point estimator and the one-point residual estimator approximate the gradient of the smoothed surrogate under decision-dependent sampling.
    \item In Section~\ref{sec:zo-convergence}, we combine these gradient estimators with the O2NC framework and provide the first explicit convergence guarantee for stochastic non-smooth non-convex optimization with decision-dependent distributions, with SZO complexity of $\gO(d^2\sigma^2\delta^{-3}\epsilon^{-3})$ for finding a $(\delta,\epsilon)$-Goldstein stationary point.
    \item In Section~\ref{sec:smooth-extension}, we develop a general reduction framework for the smooth and Hessian-Lipschitz settings. We show that the single-feedback zeroth-order method matches state-of-the-art SZO complexity of the prior methods and improves the highly smooth rate from $\gO(d^2\epsilon^{-5})$ to $\gO(d^2\sigma^2\epsilon^{-9/2})$.
\end{itemize}


\section{Preliminaries}
\label{sec:preliminaries}
Throughout, $\norm{\cdot}$ denotes the Euclidean norm. For $\vx \in \sR^d$ and $r>0$, define
\begin{align*}
    \sB(\vx,r) \triangleq \{ \vy \in \sR^d : \norm{\vy-\vx} \le r \}, \qquad
    \sB^d \triangleq \sB(\vzero,1), \qquad
    \sS^{d-1} \triangleq \{ \vu \in \sR^d : \norm{\vu}=1 \}.
\end{align*}
For a nonempty set $S \subseteq \sR^d$, let
$
    {\rm dist}(\vx,S) \triangleq \inf_{\vy \in S} \norm{\vx-\vy}.
$
We also let $\Pi_S$ denote the Euclidean projection onto a closed convex set $S$.
All expectations are denoted by $\E[\cdot]$, with subscripts added when the underlying randomness needs to be emphasized. For a locally Lipschitz function $h$, $\partial h(\vx)$ denotes its Clarke subdifferential.

We make the following assumptions on the objective function \eqref{prob:main}.
\begin{asm} \label{asm:lower}
The objective function is lower bounded such that $
    f^* \triangleq \inf_{\vx \in \sR^d} f(\vx) > -\infty$.
\end{asm}

\begin{asm}\label{asm:lip_function}
The objective function is $L$-Lipschitz continuous on $\sR^d$, i.e., for all $\vx,\vy \in\sR^d$, it holds that
\begin{align*}
    |f(\vx)-f(\vy)| \le L \norm{\vx-\vy}.
\end{align*}
\end{asm}

We then introduce the definition of Clarke and Goldstein subdifferentials~\cite{clarke1990optimization,goldstein1977optimization}.
\begin{dfn}[Clarke and Goldstein subdifferentials]
For a locally Lipschitz function $f$ and $\vx \in \sR^d$, the Clarke subdifferential is
\begin{align*}
    \partial f(\vx)
    \triangleq
    \operatorname{conv}
    \left\{
        \vg :
        \exists \vx_n \to \vx,\;
        \nabla f(\vx_n)\; \text{exists},\;
        \nabla f(\vx_n) \to \vg
    \right\}.
\end{align*}
For $\delta>0$, the Goldstein $\delta$-subdifferential is
\begin{align*}
    \partial_\delta f(\vx)
    \triangleq
    \operatorname{conv}
    \Big(
        \cup_{\vy \in \sB(\vx,\delta)} \partial f(\vy)
    \Big).
\end{align*}
\end{dfn}
We aim to find an approximate Goldstein stationary point~\cite{zhang2020complexity} of the non-smooth non-convex objective.
\begin{dfn}[$(\delta,\epsilon)$-Goldstein stationary point] \label{dfn:Goldsta}
A point $\vx \in \sR^d$ is a $(\delta,\epsilon)$-Goldstein stationary point of $f$ if 
${\rm dist}( \vzero, \partial_\delta f(\vx)) \le \epsilon.$
\end{dfn}

\subsection{Randomized Smoothing}
\label{sec:randomized}
In this section, we introduce randomized smoothing~\cite{duchi2012randomized,yousefian2012stochastic,nesterov2017random} over the unit ball~\cite{hazan2016graduated} and summarize its properties~\cite{lin2022gradient,chen2023faster}.
\begin{dfn}[$\delta$-smoothed surrogate]\label{dfn:f-delta}
For $\delta>0$, define the $\delta$-smoooth surrogate of function $f$ be
\begin{align*}
    f_\delta(\vx) \triangleq \E_{\vu \sim {\rm Unif}(\sB^d)}[f(\vx+\delta \vu)].
\end{align*}
\end{dfn}

\begin{restatable}{prop}{propfdeltasmooth}\label{prop:f-delta-smooth}
Suppose Assumptions \ref{asm:lower} and \ref{asm:lip_function} hold. 
Then for every $\delta>0$, the smoothed function $f_\delta$ satisfies:
\begin{enumerate}
    \item[(a)] $|f_\delta(\vx)-f(\vx)| \le L\delta$ for all $\vx \in \sR^d$.
    \item[(b)] $f_\delta$ is $L$-Lipschitz continuous.
    \item[(c)] $f_\delta$ is differentiable on $\sR^d$.
    \item[(d)] $\nabla f_\delta$ is $(c\sqrt{d}L/\delta)$-Lipschitz continuous for some universal constant $c>0$.
    \item[(e)] $\nabla f_\delta(\vx) \in \partial_\delta f(\vx)$ for all $\vx \in \sR^d$.
\end{enumerate}
\end{restatable}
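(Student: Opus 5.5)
We prove the five items in turn, using only the $L$-Lipschitz continuity of $f$ (Assumption~\ref{asm:lip_function}). Assumption~\ref{asm:lower} is not needed beyond guaranteeing that $f_\delta$ is finite. Each item follows from a standard randomized-smoothing argument; items (c) and (d) need a change of variables and a geometric volume estimate.

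\textbf{Items (a) and (b).} For (a), write $|f_\delta(\vx)-f(\vx)| \le \E_{\vu}|f(\vx+\delta\vu)-f(\vx)| \le L\delta\,\E\norm{\vu} \le L\delta$. For (b), $|f_\delta(\vx)-f_\delta(\vy)| \le \E_{\vu}|f(\vx+\delta\vu)-f(\vy+\delta\vu)| \le L\norm{\vx-\vy}$.

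\textbf{Items (c) and (d).} We use the change of variables
\[
f_\delta(\vx)=\frac{1}{V_d\delta^d}\int_{\sB(\vx,\delta)} f(\vy)\,d\vy ,
\]
where $V_d$ is the volume of $\sB^d$. Since $f$ is Lipschitz, Rademacher's theorem gives $\nabla f$ almost everywhere with $\norm{\nabla f}\le L$. Dominated convergence then justifies differentiating under the integral, so
\[
\nabla f_\delta(\vx)=\E_{\vu\sim{\rm Unif}(\sB^d)}[\nabla f(\vx+\delta\vu)] .
\]
The map $\vx\mapsto \E_{\vu}[\nabla f(\vx+\delta\vu)]$ is continuous: indeed, its difference at $\vx$ and $\vy$ is bounded by $L/(V_d\delta^d)$ times the volume of the symmetric difference $\sB(\vx,\delta)\triangle\sB(\vy,\delta)$, which tends to $0$. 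Hence $f_\delta$ is $C^1$, proving (c). The same bound gives (d), so the remaining task is a quantitative estimate of this symmetric difference.

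\textbf{Main obstacle: the volume estimate.} We need the symmetric difference to have volume at most $c\sqrt d\,\norm{\vx-\vy}\,\delta^{d-1}V_d$. Let $h=\norm{\vx-\vy}$. The symmetric difference is contained in a slab-like region, and its volume is at most $h$ times the volume $V_{d-1}\delta^{d-1}$ of a $(d-1)$-dimensional cross-section, times $2$. The ratio $V_{d-1}/V_d = \Gamma(d/2+1)/(\sqrt\pi\,\Gamma(d/2+1/2))$ is $\gO(\sqrt d)$ by Gautschi's inequality. This yields the constant $c\sqrt d L/\delta$ in (d).

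An alternative route is the Stokes/divergence form $\nabla f_\delta(\vx)=\frac{d}{\delta}\E_{\vv\sim{\rm Unif}(\sS^{d-1})}[f(\vx+\delta\vv)\vv]$. It gives the same estimate, but the sphere-measure bound is equivalent, so we use the volume estimate above.

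\textbf{Item (e).} We have $\nabla f_\delta(\vx)=\E[\nabla f(\vx+\delta\vu)]$, an average of gradients $\nabla f(\vz)\in\partial f(\vz)$ taken at points $\vz\in\sB(\vx,\delta)$ where $f$ is differentiable. This average lies in the closed convex hull of $\cup_{\vz\in\sB(\vx,\delta)}\partial f(\vz)$. To remove the closure, note that the Clarke subdifferential map is outer semicontinuous with compact values and $\sB(\vx,\delta)$ is compact, so the union is compact. Its convex hull is therefore compact (Carath\'eodory), and hence closed. Thus $\nabla f_\delta(\vx)\in\partial_\delta f(\vx)$.
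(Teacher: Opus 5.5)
Your argument is correct. Items (a) and (b) match the paper's computation exactly. For (c)--(e), however, the paper gives no argument and simply cites Duchi et al.\ and Zhang et al.\ as standard facts, whereas you supply a self-contained proof. Your proof has three ingredients:
\begin{itemize}
\item the representation $\nabla f_\delta(\vx)=\E_{\vu}[\nabla f(\vx+\delta\vu)]$, obtained via Rademacher and dominated convergence, with continuity upgrading it to $C^1$;
\item the bound $\mathrm{vol}(\sB(\vx,\delta)\triangle\sB(\vy,\delta))\le 2\norm{\vx-\vy}V_{d-1}\delta^{d-1}$, combined with Gautschi's inequality $V_{d-1}/V_d\le\sqrt{(d+2)/(2\pi)}$;
\item a compactness argument for (e).
\end{itemize}

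Your route yields an explicit constant; for instance $c=\sqrt{6/\pi}$ works for all $d\ge 1$. It also handles explicitly a point that is easy to overlook. The paper defines $\partial_\delta f(\vx)$ with $\operatorname{conv}$ rather than the closed convex hull, so an average of gradients only lands there once you know that $\bigcup_{\vz\in\sB(\vx,\delta)}\partial f(\vz)$ is compact. Your observation supplies this: outer semicontinuity, values bounded by $L$, and a closed ball, followed by Carath\'eodory. The paper's route buys only brevity.

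Two small tightenings. First, the symmetric difference is not literally contained in a slab. The cleanest justification of your volume bound is Fubini along lines parallel to $\vx-\vy$. On each such line the two balls cut segments that are translates of each other by $\norm{\vx-\vy}$, so their symmetric difference has length at most $2\norm{\vx-\vy}$. Only lines meeting a $(d-1)$-ball of radius $\delta$ contribute. Second, Assumption~\ref{asm:lower} is not needed even for finiteness of $f_\delta$, since a Lipschitz $f$ is bounded on every ball.
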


\subsection{The Online-To-Non-Convex Conversion on the Smoothed Surrogate}
We recall the following O2NC reduction from \citet{cutkosky2023optimal}, which will be used throughout the paper.
We apply the O2NC to the smoothed surrogate $f_{\delta}$ instead of $f$~\cite{kornowski2024algorithm}.
Algorithm \ref{alg:O2NC} presents the update rule.

\begin{algorithm}[htbp]
\caption{O2NC on $f_{\delta}$}\label{alg:O2NC}
\renewcommand{\algorithmicrequire}{\textbf{Input:}}
\renewcommand{\algorithmicensure}{\textbf{Output:}}
\begin{algorithmic}[1]
\Require smoothing radius $\delta>0$, block length $M \in \sN$, number of blocks $K \in \sN$, stepsize $\eta>0$
\State $D=\delta/M$, $T=KM$, $\vx_0=\vzero$
\For{$k=1,\ldots,K$}
\State $\vDelta_{(k-1)M+1}=\vzero$
\For{$m=1,\ldots,M$}
\State $t=(k-1)M+m$
\State Sample $s_t \sim {\rm Unif}[0,1]$
\State 
$\vy_t=\vx_{t-1}+s_t\vDelta_t$
\State 
$\vx_t=\vx_{t-1}+\vDelta_t$
\State Construct a stochastic estimator $\vg_t$ of $\nabla f_\delta(\vy_t)$
\State $\vDelta_{t+1}=\Pi_{\sB(\vzero,D)}(\vDelta_t-\eta \vg_t)$
\EndFor
\EndFor
\State Sample $k_{\rm out} \sim {\rm Unif}\{1,\ldots,K\}$ and return $\bar \vy_{k_{\rm out}}=\frac{1}{M}\sum_{t=(k_{\rm out}-1)M+1}^{k_{\rm out}M}\vy_t$
\end{algorithmic}
\end{algorithm}

The following theorem states the convergence guarantee of O2NC when a suitable estimator of $\nabla f_{\delta}(\vy_t)$ is available. Let $\mathcal{F}_t$ be the filtration that contains all randomness revealed through iteration $t$ and all pre-query randomness for iteration $t+1$; thus, $\mathcal{F}_{t-1}$ includes the interpolation randomness used to form $\vy_t$, but excludes the directions and oracle feedback used to construct $\vg_t$. 

\begin{thm}\label{thm:O2NC-generic}
Suppose Assumptions \ref{asm:estimator}, \ref{asm:lower}, and \ref{asm:lip_function} hold, and the estimator sequence satisfies
\begin{align}
    \label{eq:O2NC_condi}
    \E[\vg_t \mid \mathcal{F}_{t-1}] = \nabla f_\delta(\vy_t),
    \qquad
    \E[\norm{\vg_t}^2] \le G^2
\end{align}
for all $t=1,\ldots, T$. Choose $ \eta=\frac{D}{G\sqrt{M}}$,
then
\begin{align*}
    \E\Big[
        \Big\|
            \frac{1}{M}
            \sum_{t=(k_{\rm out}-1)M+1}^{k_{\rm out}M}
            \nabla f_\delta(\vy_t)
        \Big\|
    \Big]
    \le
    \frac{M(\gamma+\delta L)}{\delta T}
    +
    \frac{2G}{\sqrt{M}},
\end{align*}
where $\gamma\triangleq f(\vx_0)-f^* $.
In particular, it is enough to set $T = \mathcal{O}(G^2(\gamma + \delta L)\delta^{-1}\epsilon^{-3})$
to find the $(\delta,\epsilon)$-Goldstein stationary point of $f$ in expectation.
\end{thm}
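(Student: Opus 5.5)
The plan follows the standard O2NC analysis of \citet{cutkosky2023optimal}, applied to $f_\delta$, which is differentiable by Proposition~\ref{prop:f-delta-smooth}(c). The engine is an exact telescoping identity for the iterates. Since $\vx_t=\vx_{t-1}+\vDelta_t$ and $s_t\sim{\rm Unif}[0,1]$ is independent of the past, the fundamental theorem of calculus gives
\[
f_\delta(\vx_t)-f_\delta(\vx_{t-1})=\E_{s_t}\big[\inner{\nabla f_\delta(\vx_{t-1}+s_t\vDelta_t)}{\vDelta_t}\big]=\E\big[\inner{\nabla f_\delta(\vy_t)}{\vDelta_t}\mid \vx_{t-1},\vDelta_t\big].
\]
Moreover, $\vDelta_t$ is $\mathcal{F}_{t-1}$-measurable and $\E[\vg_t\mid\mathcal{F}_{t-1}]=\nabla f_\delta(\vy_t)$, so we may replace $\nabla f_\delta(\vy_t)$ by $\vg_t$ in expectation. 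Summing over $t=1,\dots,T$ gives
\[
\E[f_\delta(\vx_T)]-f_\delta(\vx_0)=\E\sum_t\inner{\vg_t}{\vDelta_t}.
\]
Proposition~\ref{prop:f-delta-smooth}(a) then yields $f_\delta(\vx_0)-\inf f_\delta\le \gamma+2\delta L$, which is of order $\gamma+\delta L$.

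Next, introduce a per-block comparator. Let $\bar\nabla_k\triangleq\frac1M\sum_{t\in\text{block }k}\nabla f_\delta(\vy_t)$ and set $\vu_k\triangleq -D\,\bar\nabla_k/\norm{\bar\nabla_k}$. Decompose
\[
\sum_t\inner{\vg_t}{\vDelta_t}=\sum_k\sum_{t\in k}\inner{\vg_t}{\vDelta_t-\vu_k}+\sum_k\sum_{t\in k}\inner{\vg_t}{\vu_k}.
\]
The first term is the regret of projected online gradient descent on linear losses over $\sB(\vzero,D)$, restarted at $\vzero$ in each block. The standard OGD bound, with diameter $2D$ and comparator of norm $D$, gives block regret at most $\frac{D^2}{2\eta}+\frac{\eta}{2}\sum_{t}\norm{\vg_t}^2$. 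Taking expectations with $\eta=D/(G\sqrt M)$, this is at most $DG\sqrt M$ per block.

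The second term is more delicate, and it is the step I expect to be the main obstacle. The comparator $\vu_k$ depends on future gradients within the block, so we cannot simply replace $\vg_t$ by its conditional mean. Instead, write
\[
\inner{\vg_t}{\vu_k}=\inner{\nabla f_\delta(\vy_t)}{\vu_k}+\inner{\vg_t-\nabla f_\delta(\vy_t)}{\vu_k}.
\]
Summing the first part over the block gives exactly $-DM\norm{\bar\nabla_k}$. For the noise part, apply Cauchy--Schwarz with $\norm{\vu_k}\le D$ to get at most $D\,\norm{\sum_{t\in k}(\vg_t-\nabla f_\delta(\vy_t))}$. The summands form a martingale difference sequence, so the expectation of this norm is at most $D\sqrt{\sum_t\E\norm{\vg_t}^2}\le DG\sqrt M$, using that the conditional variance is at most the second moment. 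The key point is to avoid conditioning on $\vu_k$ by bounding uniformly over the ball.

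Now combine everything. We have
\[
-(\gamma+2\delta L)\le \E[f_\delta(\vx_T)]-f_\delta(\vx_0)\le \sum_k\big(2DG\sqrt M-DM\,\E\norm{\bar\nabla_k}\big).
\]
Rearranging gives
\[
\frac1K\sum_k\E\norm{\bar\nabla_k}\le\frac{\gamma+2\delta L}{DMK}+\frac{2G}{\sqrt M}.
\]
Since $D=\delta/M$ and $K=T/M$, the first term equals $\frac{M(\gamma+2\delta L)}{\delta T}$, which matches the claimed bound up to the constant on $\delta L$. Choosing $k_{\rm out}$ uniformly turns the average over $k$ into the stated expectation.

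Finally, convert this to a Goldstein stationarity guarantee. Every $\vy_t$ in block $k$ lies within $MD=\delta$ of $\bar\vy_k$, because the within-block displacements sum to at most $MD$. Hence, by Proposition~\ref{prop:f-delta-smooth}(e), each $\nabla f_\delta(\vy_t)$ lies in $\partial_{\delta} f(\vy_t)\subseteq\partial_{2\delta}f(\bar\vy_k)$, and by convexity so does $\bar\nabla_k$. Rescaling $\delta$ by a constant handles the factor of $2$. Choosing $M=\Theta(G^2\epsilon^{-2})$ and $T=\Theta(M(\gamma+\delta L)\delta^{-1}\epsilon^{-1})$ makes both terms $\mathcal{O}(\epsilon)$, which gives the stated $T$.
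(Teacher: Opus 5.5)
Your proposal is correct and follows essentially the paper's route: the paper invokes Theorem 4 of \citet{cutkosky2023optimal} as a black box for exactly the telescoping identity and per-block comparator decomposition you unroll, then bounds the OGD regret by $KDG\sqrt{M}$ and the martingale term by $G/\sqrt{M}$ as you do. The one slip is the constant $2\delta L$: since $f_\delta(\vx)=\E[f(\vx+\delta\vu)]\ge f^*$ pointwise, you have $\inf f_\delta\ge f^*$, so you only need $f_\delta(\vx_0)\le f(\vx_0)+\delta L$, and this recovers the stated $\gamma+\delta L$ exactly.
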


\section{Zeroth-Order O2NC for Non-Smooth Non-Convex Decision Dependent Problems}
\label{sec:o2nc-framework}
Although the O2NC framework provides explicit convergence guarantee for non-smooth non-convex problems, it requires a good unbiased estimator of $\nabla f_{\delta}(\vx)$.
When $\xi\sim \Xi$ is decision-independent, the prior zeroth-order methods~\cite{ghadimi2013stochastic,nesterov2017random,lin2022gradient,chen2023faster,kornowski2024algorithm, bach2016highly} use
\begin{align}
    \tilde{\vg}_t \triangleq \frac{d}{2\delta} \left(F(\vy_t+\delta\vu_t;\xi_t) - F(\vy_t-\delta\vu_t;\xi_t)\right)\vu_t
\end{align}
which satisfies Condition~\ref{eq:O2NC_condi} with $\E [\|\tilde{\vg}_t\|^2]\leq dL^2$ and thus leads to the complexity of $\mathcal{O}(dL^2\delta^{-1}\epsilon^{-3})$ through the O2NC framework.
However, $\E [\|\tilde{\vg}_t\|^2]\leq dL^2$ requires the Lipschitz continuity assumption on $F(\vx;\xi)$,
which does not hold in our setting in general.
In addition, for the decision-dependent problems, one cannot access $F(\vy_t+\delta\vu_t;\xi_t)$ and $F(\vy_t-\delta\vu_t;\xi_t)$
under the same $\xi_t$.

In this section, we study two types of zeroth-order estimators for approximating $\nabla f_{\delta}(\vx)$ (see Section~\ref{sec:zo-estimators}). 
We also equip these zeroth-order estimators in the O2NC framework and show the explicit SZO complexity for the non-smooth non-convex decision-dependent problems (Section~\ref{sec:zo-convergence}).

\subsection{Zeroth-Order Gradient Estimators} 
\label{sec:zo-estimators}
We first construct $\vg_t$ by the two-points randomized estimator as in~\cite{hikima2025zeroth}:
\begin{align}
    \label{eq:two_point_estimator}
  \vg_{\text{2pt}}(\vy_t,\vu_t;\xi_t^{+},\xi_t^{-}) \triangleq \frac{d}{2\delta} \left(F(\vy_t+\delta\vu_t;\xi_t^{+}) - F(\vy_t-\delta\vu_t;\xi_t^{-})\right)\vu_t,
\end{align}
where $\vu_t \sim {\rm Unif}(\sS^{d-1})$, $\xi_t^+ \sim \Xi(\vy_t+\delta\vu_t)$, and $\xi_t^- \sim \Xi(\vy_t-\delta\vu_t)$.
\citet{hikima2025zeroth} study how mini-batch version of the two-points estimator \eqref{eq:two_point_estimator} approximates $\nabla f(\vx_t)$ when $f$ is smooth.
In addition, their final convergence result requires setting the per-iteration mini-batch size to $N=\mathcal{O}(d^2\epsilon^{-4})$ per iteration.
The following lemma shows that the two-point estimator can be considered an unbiased stochastic gradient estimator of $\nabla f_{\delta}(\vx_t)$ without smooth conditions or mini-batch requirements.

\begin{lem}\label{lem:zo-var}
Suppose Assumptions \ref{asm:estimator} and \ref{asm:lip_function} hold. Then, we have
\begin{align*}
    \E[\vg_{\text{\rm 2pt}}(\vy_t,\vu_t; \xi_t^{+},\xi_t^{-})\mid \mathcal{F}_{t-1}]
    =
    \nabla f_\delta(\vy_t),
\end{align*}
and
\begin{align*}
    \E\big[
        \norm{\vg_{\text{\rm 2pt}}(\vy_t,\vu_t;\xi_t^{+},\xi_t^{-})}^2
        \mid
        \mathcal{F}_{t-1}
    \big]
    \le
    \frac{d^2\sigma^2}{2\delta^2}
    +
    16\sqrt{2\pi}dL^2.
\end{align*}
\end{lem}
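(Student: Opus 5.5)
Conditioning on $\mathcal{F}_{t-1}$ fixes $\vy_t$, so only $\vu_t$, $\xi_t^+$ and $\xi_t^-$ are random. For unbiasedness, I first integrate out the oracle noise given $\vu_t$. By Assumption~\ref{asm:estimator}, $\E[F(\vy_t\pm\delta\vu_t;\xi_t^\pm)\mid \vu_t,\mathcal{F}_{t-1}] = f(\vy_t\pm\delta\vu_t)$. Therefore the conditional mean reduces to
\begin{align*}
\frac{d}{2\delta}\E_{\vu}\big[(f(\vy_t+\delta\vu)-f(\vy_t-\delta\vu))\vu\big]=\frac{d}{\delta}\E_{\vu}[f(\vy_t+\delta\vu)\vu],
\end{align*}
using the symmetry $\vu\mapsto-\vu$ of ${\rm Unif}(\sS^{d-1})$. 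I then invoke the standard sphere–ball identity (divergence/Stokes theorem, as in \citet{flaxman2005online}): $\nabla f_\delta(\vy)=\frac{d}{\delta}\E_{\vu\sim{\rm Unif}(\sS^{d-1})}[f(\vy+\delta\vu)\vu]$. This holds because $f$ is Lipschitz, hence absolutely continuous, and $f_\delta$ is differentiable by Proposition~\ref{prop:f-delta-smooth}(c). This gives the first claim.

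For the second moment, I write $F(\vy_t\pm\delta\vu_t;\xi^\pm)=f(\vy_t\pm\delta\vu_t)+e^\pm$, where, conditionally on $\vu_t$, the errors $e^\pm$ are mean zero, have variance at most $\sigma^2$, and are independent of each other, since they are drawn separately. Because $\norm{\vu_t}=1$,
\begin{align*}
\norm{\vg_{\rm 2pt}}^2=\frac{d^2}{4\delta^2}\big(f(\vy_t+\delta\vu_t)-f(\vy_t-\delta\vu_t)+e^+-e^-\big)^2 .
\end{align*}
Taking expectation over $\xi^\pm$ given $\vu_t$, the cross terms vanish, and $\E[(e^+-e^-)^2]\le 2\sigma^2$. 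This yields exactly the noise term $\frac{d^2}{4\delta^2}\cdot 2\sigma^2=\frac{d^2\sigma^2}{2\delta^2}$, together with the deterministic term $\frac{d^2}{4\delta^2}\E_{\vu}[(f(\vy_t+\delta\vu)-f(\vy_t-\delta\vu))^2]$.

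The main obstacle is bounding this deterministic term by $O(dL^2)$ rather than the naive $d^2L^2$. I will use concentration of Lipschitz functions on the sphere. The map $h(\vu)=f(\vy_t+\delta\vu)$ is $\delta L$-Lipschitz on $\sS^{d-1}$. I write $h(\vu)-h(-\vu)=(h(\vu)-\E h)-(h(-\vu)-\E h)$ and apply $(a-b)^2\le 2a^2+2b^2$, which reduces the term to bounding $\E(h-\E h)^2$. Lévy-type concentration gives $\Pr(|h-\E h|\ge s)\le 2\sqrt{2\pi}e^{-ds^2/(8\delta^2L^2)}$, as in Shamir's / Wainwright's form. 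Integrating the tail, $\E(h-\E h)^2=\int_0^\infty 2s\Pr(\cdot)\,ds\le 2\sqrt{2\pi}\cdot 8\delta^2L^2/d$. Multiplying by $4\cdot d^2/(4\delta^2)$ gives a constant times $\sqrt{2\pi}dL^2$. The constant needs to be tracked through the chosen concentration constant to land at $16\sqrt{2\pi}$. If my constants come out slightly different, I would adjust by applying the concentration bound directly to the odd function $h(\vu)-h(-\vu)$, which has mean zero and Lipschitz constant $2\delta L$. Summing the two pieces gives the stated bound.
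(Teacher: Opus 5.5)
Your proposal is correct and follows essentially the same route as the paper. It uses the same split into oracle noise plus the deterministic difference $f(\vy_t+\delta\vu_t)-f(\vy_t-\delta\vu_t)$, the same vanishing cross term and $2\sigma^2$ noise bound, and the same sphere-concentration variance bound for the deterministic part. No constant adjustment is needed either: your split $(a-b)^2\le 2a^2+2b^2$ with the $\delta L$-Lipschitz $h$ gives $4\Var(h)\le 64\sqrt{2\pi}\delta^2L^2/d$, and multiplying by $d^2/(4\delta^2)$ lands exactly on $16\sqrt{2\pi}dL^2$, the same bound the paper obtains from your fallback route (the odd, $2\delta L$-Lipschitz map).
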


We then construct $\vg_t$ by the one-point estimator. 
The standard one-point estimator used in prior work~\cite{liu2024twotimescale,flaxman2005online},
$\vg_t = \frac{d}{\delta}F(\vx_t+\delta\vu_t;\xi_t)\vu_t$,
is not suitable for our setting, since it leads to a heavy dependence on $\sup_{\vx,\xi} |F(\vx;\xi)|$ and also requires smoothness of $f$.
Inspired by the residual-feedback ideas of~\citet{ye2025onepoint} and \citet{zhang2022new}, we instead use the following one-point estimator, which leverages the stochastic feedback from the previous iteration.
\begin{align}
    \label{eq:one_point_estimator}
    \vg_{\text{1pt}}(\vy_t,\vu_t,\xi_t;\vy_{t-1},\vu_{t-1},\xi_{t-1}) \triangleq \frac{d}{\delta}\left(F(\vy_t+\delta\vu_t;\xi_t) - F(\vy_{t-1}+\delta\vu_{t-1};\xi_{t-1})\right)\vu_t,
\end{align}
where $\vu_t,\vu_{t-1}\sim {\rm Unif}(\sS^{d-1})$, $\xi_t \sim\Xi(\vy_t+\delta\vu_t)$, and $\xi_{t-1}\sim \Xi(\vy_{t-1}+\delta\vu_{t-1})$. 
At iteration $t$, since $F(\vy_{t-1}+\delta \vu_{t-1};\xi_{t-1})$ has already been evaluated in the previous iteration, only the one-point stochastic feedback $F(\vy_t+\delta\vu_t;\xi_t)$ is needed to construct \eqref{eq:one_point_estimator}.
The following lemma shows that $\vg_{\text{1pt}}(\vy_t,\vu_t,\xi_t;\vy_{t-1},\vu_{t-1},\xi_{t-1})$ approximates $\nabla f_{\delta}(\vy_t)$ when $\vy_t$ and $\vy_{t-1}$ are close.

\begin{lem}
    \label{lem:one-point-residual}
Suppose Assumptions \ref{asm:estimator} and \ref{asm:lip_function} hold. Then, we have
\begin{align*}
    \E[ \vg_{\text{\rm 1pt}}(\vy_t,\vu_t,\xi_t;\vy_{t-1},\vu_{t-1},\xi_{t-1})\mid \mathcal{F}_{t-1}]
    =
    \nabla f_\delta(\vy_t),
\end{align*}
and
\begin{align*}
    \E[\norm{ \vg_{\text{\rm 1pt}}(\vy_t,\vu_t,\xi_t;\vy_{t-1},\vu_{t-1},\xi_{t-1})}^2]
    \le
    {\frac{6d^2\sigma^2}{\delta^2}}
    +
  144\sqrt{2\pi}dL^2
    +
    {\frac{6d^2L^2}{\delta^2}}
    \E[\norm{\vy_t-\vy_{t-1}}^2].
\end{align*}
\end{lem}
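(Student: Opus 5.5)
Conditioning on $\mathcal{F}_{t-1}$, the point $\vy_t$ is fixed, and so is the previous residual $F(\vy_{t-1}+\delta\vu_{t-1};\xi_{t-1})$ (already revealed). The fresh randomness is only $\vu_t$ and $\xi_t\sim\Xi(\vy_t+\delta\vu_t)$. I will prove the two claims separately.

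\emph{Unbiasedness.} First take the expectation over $\xi_t$ given $\vu_t$. By Assumption~\ref{asm:estimator} this gives $\E[F(\vy_t+\delta\vu_t;\xi_t)\mid \vu_t,\mathcal{F}_{t-1}]=f(\vy_t+\delta\vu_t)$. It then remains to average over $\vu_t$. The residual term is a constant times $\vu_t$, so it vanishes because $\E[\vu_t]=\vzero$. The remaining term $\frac{d}{\delta}\E_{\vu}[f(\vy_t+\delta\vu)\vu]$ equals $\nabla f_\delta(\vy_t)$ by the standard sphere--ball identity (Stokes/divergence theorem). This identity uses only the Lipschitz continuity of $f$ (Assumption~\ref{asm:lip_function}), which holds for $f$ even though it may fail for $F$. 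I will cite it as in \citet{flaxman2005online} and Lemma~\ref{lem:zo-var}.

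\emph{Second moment.} Since $\norm{\vu_t}=1$, we have
\begin{align*}
\norm{\vg_{\rm 1pt}}^2=\frac{d^2}{\delta^2}\bigl(F(\vy_t+\delta\vu_t;\xi_t)-F(\vy_{t-1}+\delta\vu_{t-1};\xi_{t-1})\bigr)^2 .
\end{align*}
I split the difference into five pieces:
\begin{align*}
&[F(\vy_t+\delta\vu_t;\xi_t)-f(\vy_t+\delta\vu_t)] + [f(\vy_t+\delta\vu_t)-f(\vy_t)] + [f(\vy_t)-f(\vy_{t-1})]\\
&\quad + [f(\vy_{t-1})-f(\vy_{t-1}+\delta\vu_{t-1})] + [f(\vy_{t-1}+\delta\vu_{t-1})-F(\vy_{t-1}+\delta\vu_{t-1};\xi_{t-1})].
\end{align*}
A weighted Young inequality on this sum produces the constant $6$ appearing on the $\sigma^2$ and $\norm{\vy_t-\vy_{t-1}}^2$ terms. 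The pieces are bounded as follows.
\begin{itemize}
\item[(i)] The two noise pieces each have conditional second moment at most $\sigma^2$ by Assumption~\ref{asm:estimator}, applied at the query point given the direction. This gives the $d^2\sigma^2/\delta^2$ term.
\item[(ii)] The middle piece is at most $L\norm{\vy_t-\vy_{t-1}}$, giving $\frac{d^2L^2}{\delta^2}\norm{\vy_t-\vy_{t-1}}^2$.
\item[(iii)] The two smoothing pieces $f(\vy+\delta\vu)-f(\vy)$ must not be bounded crudely by $L\delta$, since that would yield $d^2L^2$ rather than $dL^2$.
\end{itemize}

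\emph{Main obstacle: the $dL^2$ bound in (iii).} For fixed $\vy$, the map $\vu\mapsto f(\vy+\delta\vu)$ is $\delta L$-Lipschitz on the sphere. Concentration of measure on $\sS^{d-1}$ then gives, around its mean (or median) $\bar f$, the bound $\E[(f(\vy+\delta\vu)-\bar f)^2]\le c\,\delta^2L^2/d$ with a constant of the form $\sqrt{2\pi}\cdot$const. This is exactly the bound behind the $16\sqrt{2\pi}dL^2$ in Lemma~\ref{lem:zo-var}. The difficulty is that $f(\vy)$ is not the spherical mean. To handle this, I will insert the mean $\bar f(\vy)=\E_{\vu}f(\vy+\delta\vu)$ in place of $f(\vy)$ in the decomposition. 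The middle piece then becomes $\bar f(\vy_t)-\bar f(\vy_{t-1})$, which is still $L$-Lipschitz in $\vy$. Each centered piece now has second moment $O(\delta^2L^2/d)$; multiplied by $d^2/\delta^2$ this gives $O(dL^2)$, and the Young weights combine to $144\sqrt{2\pi}dL^2$.

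For the $t-1$ piece, one must check that $\vu_{t-1}$ is still uniform given $\vy_{t-1}$. This holds because $\vy_{t-1}$ is formed before $\vu_{t-1}$ is drawn, so the bound is applied conditionally on $\mathcal{F}_{t-2}$. Finally, I take total expectations.
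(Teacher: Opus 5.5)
Your route is the paper's. The unbiasedness argument is identical. For the second moment you use the same ingredients: centering at the sphere mean $f^{\sS}_\delta(\vy)\triangleq\E_{\vu}[f(\vy+\delta\vu)]$, the concentration bound $16\sqrt{2\pi}\delta^2L^2/d$ for each centered smoothing piece (Lemma~\ref{lem:sphere-variance}), the $L$-Lipschitzness of $f^{\sS}_\delta$, and the conditional bound $\sigma^2$ on each noise piece. Your check that $\vu_{t-1}$ is uniform given $\vy_{t-1}$ is something the paper uses silently.

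\textbf{The step that fails is the constant bookkeeping.} No weighted Young inequality on your five pieces $a_1,\dots,a_5$ gives the stated constants. An inequality $(\sum_i a_i)^2\le\sum_i c_ia_i^2$ valid for all reals requires $\sum_i 1/c_i\le 1$ (test $a_i=1/c_i$). To land on the lemma's bound you would need $c_1+c_5\le 6$ for the noise pieces, $c_3\le 6$ for the drift piece, and $c_2+c_4\le 9$ for the smoothing pieces (since $9\cdot16\sqrt{2\pi}=144\sqrt{2\pi}$). But then $\sum_i 1/c_i\ge \tfrac46+\tfrac16+\tfrac49=\tfrac{23}{18}>1$. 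Equal weights, for instance, give $10d^2\sigma^2/\delta^2+160\sqrt{2\pi}dL^2+5d^2L^2\delta^{-2}\E[\norm{\vy_t-\vy_{t-1}}^2]$, which is not the claimed bound.

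\textbf{The missing idea is conditional orthogonality.} It lets some cross terms vanish instead of being paid for by Young. The fresh piece $f(\vy_t+\delta\vu_t)-f^{\sS}_\delta(\vy_t)$ has zero mean given $\mathcal{F}_{t-1}$, whereas $f^{\sS}_\delta(\vy_t)-f(\vy_{t-1}+\delta\vu_{t-1})$ is $\mathcal{F}_{t-1}$-measurable. Write $Z_t\triangleq F(\vy_t+\delta\vu_t;\xi_t)-f(\vy_t+\delta\vu_t)$, $Z_{t-1}$ analogously, and $A_t\triangleq f(\vy_t+\delta\vu_t)-f(\vy_{t-1}+\delta\vu_{t-1})$. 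The paper first applies $(a+b+c)^2\le3(a^2+b^2+c^2)$ to $Z_t$, $-Z_{t-1}$ and $A_t$. It then uses the exact decomposition
\begin{align*}
\E[A_t^2\mid\mathcal{F}_{t-1}]=\Var\big(f(\vy_t+\delta\vu_t)\mid\mathcal{F}_{t-1}\big)+\big(f^{\sS}_\delta(\vy_t)-f(\vy_{t-1}+\delta\vu_{t-1})\big)^2,
\end{align*}
and only then splits the squared bias with $(a+b)^2\le 2a^2+2b^2$. This yields weights $3,3$ on the noise, $6$ on the drift, and $3+6=9$ on the smoothing pieces. These are exactly the terms $6d^2\sigma^2/\delta^2$, $6d^2L^2\delta^{-2}\E[\norm{\vy_t-\vy_{t-1}}^2]$ and $144\sqrt{2\pi}dL^2$.

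With this orthogonality step added, your proof is complete. You could exploit even more of it: $Z_t$ has zero mean given $(\mathcal{F}_{t-1},\vu_t)$, while the other four pieces are measurable with respect to it.
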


\begin{rmk}
Compared with the second-order moment term of the two-points estimator in Lemma~\ref{lem:zo-var}, the bound in Lemma \ref{lem:one-point-residual} contains an additional drift term proportional to $\E[\norm{\vy_t-\vy_{t-1}}^2]$.
In the next section, we will show that this drift term can be well-controlled in the O2NC framework.
\end{rmk}

\subsection{The ZO-O2NC Method and Its Convergence Analysis}
\label{sec:zo-convergence}
In this section, we instantiate the O2NC framework with the two-point and one-point estimators in Algorithm~\ref{alg:O2NC-detail}. 
The algorithm is written as a unified template in order to highlight the only difference between the two options, namely, the choice of the gradient estimator $\vg_t$.
For Option~II, the estimator uses the stochastic feedback from the previous iteration and therefore requires an additional initialization step before the first iteration.
To keep the main text concise, Algorithm~\ref{alg:O2NC-detail} presents Option~II in this template form, while a fully specified implementation of Option~II is provided in Appendix~\ref{app:option-ii-alg}.

\begin{algorithm}[htbp]
\caption{{ZO-O2NC}}\label{alg:O2NC-detail}
\renewcommand{\algorithmicrequire}{\textbf{Input:}}
\renewcommand{\algorithmicensure}{\textbf{Output:}}
\begin{algorithmic}[1]
\Require {smoothing radius $\delta>0$, block length $M \in \sN$, number of blocks $K \in \sN$, stepsize $\eta>0$, Option I or II}
\State{$D=\delta/M$, $T=KM$, $\vx_0=\vzero$}
\For{$k=1,\ldots,K$}
\State $\vDelta_{(k-1)M+1}=\vzero$
\For{$m=1,\ldots,M$}
\State $t=(k-1)M+m$
\State Sample $s_t \sim {\rm Unif}[0,1]$
\State $\vy_t=\vx_{t-1}+s_t\vDelta_t$
\State $\vx_t=\vx_{t-1}+\vDelta_t$
\State $\vg_t = {\begin{cases}
     &\vg_{\text{2pt}}(\vy_t,\vu_t,\xi_t^{+},\xi_t^{-})~~\text{by~\eqref{eq:two_point_estimator}}~~~~~~~~~~~~~~~~~~~~~~~~~~~\text{[Option I]}\\
     & \vg_{\text{1pt}}(\vy_t,\vu_t,\xi_t;\vy_{t-1},\vu_{t-1},\xi_{t-1})~~\text{by~\eqref{eq:one_point_estimator}}~~~~~~\text{[Option II]}
\end{cases}}
$
\State $\vDelta_{t+1}=\Pi_{\sB(\vzero,D)}(\vDelta_t-\eta \vg_t)$ 
\EndFor
\EndFor
\State Sample $k_{\rm out} \sim {\rm Unif}\{1,\ldots,K\}$ and return $\bar \vy_{k_{\rm out}}=\frac{1}{M}\sum_{t=(k_{\rm out}-1)M+1}^{k_{\rm out}M}\vy_t$
\end{algorithmic}
\end{algorithm}

We first show that the second moment bound of the one-point estimator can 
be well controlled in the O2NC framework since $\Delta_t$ is always bounded by $D$, which in turn controls $\|\vy_t-\vy_{t-1}\|$.
The following lemma bounds the second moment of $\vg_t$ in O2NC with Option II.
\begin{lem}\label{lem:one-point-o2nc-moment}
Suppose Assumptions \ref{asm:estimator} and \ref{asm:lip_function} hold, and Algorithm~\ref{alg:O2NC-detail} is run with Option II. Then, for every $t \ge 1$,
\begin{align*}
    \E[\norm{\vg_t}^2]
    \le
{\frac{6d^2\sigma^2}{\delta^2}}
    +
   {385d^2L^2}.
\end{align*}
\end{lem}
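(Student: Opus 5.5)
The plan is to start from Lemma~\ref{lem:one-point-residual}, which already gives
\[
\E\norm{\vg_t}^2 \le \frac{6d^2\sigma^2}{\delta^2} + 144\sqrt{2\pi}dL^2 + \frac{6d^2L^2}{\delta^2}\E\norm{\vy_t-\vy_{t-1}}^2 .
\]
What remains is a deterministic bound $\norm{\vy_t-\vy_{t-1}}^2 \le c\,\delta^2$ for a small absolute constant $c$. With such a bound, the drift term is at most $6c\,d^2L^2$, and $144\sqrt{2\pi}dL^2 \le 144\sqrt{2\pi}d^2L^2 \approx 361\, d^2L^2$. The target constant $385$ then leaves room for $6c \le 24$, i.e.\ $c\le 4$.

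\textbf{Within a block.} Suppose $t-1$ and $t$ lie in the same block. Then
\[
\vy_t-\vy_{t-1} = \vx_{t-1}+s_t\vDelta_t - \vx_{t-2} - s_{t-1}\vDelta_{t-1} = (1-s_{t-1})\vDelta_{t-1} + s_t\vDelta_t .
\]
The projection step keeps every $\vDelta$ in $\sB(\vzero,D)$, and $s_t,s_{t-1}\in[0,1]$. Hence $\norm{\vy_t-\vy_{t-1}}\le 2D = 2\delta/M\le 2\delta$, which gives squared norm at most $4\delta^2$.

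\textbf{Across a block boundary.} Here $\vDelta_t=\vzero$ is reset, so $\vy_t=\vx_{t-1}$. We still have $\vx_{t-1}=\vx_{t-2}+\vDelta_{t-1}$, so the same identity holds with $s_t\vDelta_t=0$, and the bound becomes $\norm{\vy_t-\vy_{t-1}}\le D$.

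\textbf{First iteration.} For $t=1$, the residual uses the initialization point from the fully specified Option~II in Appendix~\ref{app:option-ii-alg}. I would take it to be $\vy_0=\vx_0$ with a fresh direction $\vu_0$, so that $\vy_1-\vy_0 = s_1\vDelta_1 = \vzero$. This makes the bound hold trivially.

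In every case $\norm{\vy_t-\vy_{t-1}}^2\le 4D^2\le 4\delta^2$. Taking expectations, substituting into the bound from Lemma~\ref{lem:one-point-residual}, and using $d\le d^2$ gives
\[
\frac{6d^2\sigma^2}{\delta^2} + (144\sqrt{2\pi}+24)\,d^2L^2 \le \frac{6d^2\sigma^2}{\delta^2} + 385\, d^2L^2 .
\]

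\textbf{Main obstacle.} Matching the stated constant requires care at the block boundaries and the initialization convention. I need to confirm that the appendix's initialization does not introduce a larger displacement than $D$ (for example, by evaluating the reference point at some other location). If it does, I would fall back to bounding that single step by $2D$ as well. That step would still be absorbed, since $2D\le2\delta$.
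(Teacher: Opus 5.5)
Your proposal is correct and matches the paper's proof: you invoke Lemma~\ref{lem:one-point-residual}, bound $\norm{\vy_t-\vy_{t-1}}\le \norm{\vDelta_t}+\norm{\vDelta_{t-1}}\le 2D\le 2\delta$ via the identity $\vy_t-\vy_{t-1}=s_t\vDelta_t+(1-s_{t-1})\vDelta_{t-1}$, and absorb $144\sqrt{2\pi}dL^2+24d^2L^2\le 385d^2L^2$ using $d\ge 1$. For $t=1$ the paper also uses $\vy_1=\vx_0=\vy_0$, which the appendix's initialization confirms, so your fallback is unnecessary; your separate block-boundary case is simply the instance $\vDelta_t=\vzero$ of that same identity.
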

Now, we are ready to present the formal SZO complexity of our ZO-O2NC method for solving non-smooth non-convex decision-dependent problems.
\begin{thm}\label{thm:O2NC-NSNC}
Suppose Assumptions \ref{asm:estimator}, \ref{asm:lower}, and \ref{asm:lip_function} hold, and define
\begin{align*}
    \bar \vg_{k_{\rm out}}
    \triangleq
    \frac{1}{M}
    \sum_{t=(k_{\rm out}-1)M+1}^{k_{\rm out}M}
    \nabla f_\delta(\vy_t),
    \qquad
    \gamma
    \triangleq
    f(\vx_0)-f^*
\end{align*}
Then the following statements hold.
\begin{enumerate}
    \item[(a)] If Algorithm~\ref{alg:O2NC-detail} is run with Option I and
    \begin{align*}
        G_{\rm I}^2
        \triangleq
        \frac{d^2\sigma^2}{2\delta^2}
        +
        \aireplace{16\sqrt{2\pi}dL^2}{2dL^2},~~
        M
        =
        \left\lceil
            \frac{16G_{\rm I}^2}{\epsilon^2}
        \right\rceil,~~
\eta = \frac{D}{G_{\rm I}\sqrt{M}}, ~~ T
        =
        M
        \left\lceil
            \frac{2(\gamma+\delta L)}{\delta\epsilon}
        \right\rceil,
    \end{align*}
then $\bar \vy_{k_{\rm out}}$ satisfies $ \E[\norm{\bar \vg_{k_{\rm out}}}]
        \le
        \epsilon.$
This means that Algorithm~\ref{alg:O2NC-detail} can find the $(\delta,\epsilon)$-Goldstein stationary point of $f(\cdot)$ with SZO complexity of 
    \begin{align*}
        N_{\rm SZO}
        =
        2T = \mathcal{O}\left(
           d^2\gamma \sigma^2\delta^{-3}\epsilon^{-3} + d^2 L\sigma^2\delta^{-2}\epsilon^{-3} + dL^2\gamma\delta^{-1}\epsilon^{-3} +dL^3\epsilon^{-3}.
            \right).  
    \end{align*}
    \item[(b)] If Algorithm~\ref{alg:O2NC-detail} is run with Option II and 
    \begin{align*}
        G_{\rm II}^2
        \triangleq
        {\frac{6d^2\sigma^2}{\delta^2}}
        +
       {\aireplace{385d^2L^2}{42d^2L^2}},
       ~~~
        M
        =
        \left\lceil
            \frac{16G_{\rm II}^2}{\epsilon^2}
        \right\rceil,
       ~~~
                \eta = \frac{D}{G_{\rm II}\sqrt{M}},
           ~~~
        T
        =
        M
        \left\lceil
            \frac{2(\gamma+\delta L)}{\delta\epsilon}
        \right\rceil,
    \end{align*}
    then $\bar \vy_{k_{\rm out}}$ satisfies $  \E[\norm{\bar \vg_{k_{\rm out}}}]
        \le
        \epsilon.$
        This means that Algorithm~\ref{alg:O2NC-detail} with Option II can find the $(\delta,\epsilon)$-Goldstein stationary point of $f(\cdot)$ with SZO complexity of 
    \begin{align*}
        N_{\rm SZO}
        =
        T+1 = \mathcal{O}\left(
           d^2\gamma \sigma^2\delta^{-3}\epsilon^{-3} + d^2 L\sigma^2\delta^{-2}\epsilon^{-3} + d^2L^2\gamma\delta^{-1}\epsilon^{-3} +d^2L^3\epsilon^{-3}
            \right).  
    \end{align*}
  \end{enumerate}
\end{thm}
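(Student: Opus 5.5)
Both parts are obtained by plugging the estimator bounds into Theorem~\ref{thm:O2NC-generic}, which requires the conditional unbiasedness $\E[\vg_t\mid\mathcal{F}_{t-1}]=\nabla f_\delta(\vy_t)$ and the second-moment bound $\E[\norm{\vg_t}^2]\le G^2$.

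\textbf{Part (a).} Unbiasedness and the conditional second-moment bound come from Lemma~\ref{lem:zo-var}. Taking total expectation gives $\E[\norm{\vg_t}^2]\le G_{\rm I}^2$.

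\textbf{Part (b).} Unbiasedness comes from Lemma~\ref{lem:one-point-residual}. The unconditional moment bound $G_{\rm II}^2$ comes from Lemma~\ref{lem:one-point-o2nc-moment}, which already absorbs the drift term $\E[\norm{\vy_t-\vy_{t-1}}^2]$.

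The one subtle point is whether the residual estimator is unbiased with respect to $\mathcal{F}_{t-1}$. The previous value $F(\vy_{t-1}+\delta\vu_{t-1};\xi_{t-1})$ is $\mathcal{F}_{t-1}$-measurable, and $\vu_t$ is fresh with $\E[\vu_t]=\vzero$. Hence the subtracted term has conditional mean zero, which is exactly what Lemma~\ref{lem:one-point-residual} asserts. The first step of Option~II, which uses the fully specified initialization in Appendix~\ref{app:option-ii-alg}, is covered by the same lemma.

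With $G\in\{G_{\rm I},G_{\rm II}\}$ and $\eta=D/(G\sqrt M)$, Theorem~\ref{thm:O2NC-generic} gives
\begin{align*}
\E[\norm{\bar\vg_{k_{\rm out}}}]\le \frac{M(\gamma+\delta L)}{\delta T}+\frac{2G}{\sqrt M}.
\end{align*}
We bound the two terms separately.
\begin{itemize}
\item Since $M\ge 16G^2/\epsilon^2$, the second term is at most $2G\cdot \epsilon/(4G)=\epsilon/2$.
\item Since $T=MK$ with $K=\lceil 2(\gamma+\delta L)/(\delta\epsilon)\rceil$, we have $M/T=1/K\le \delta\epsilon/(2(\gamma+\delta L))$. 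So the first term is at most $\epsilon/2$.
\end{itemize}
Adding the two gives $\E[\norm{\bar\vg_{k_{\rm out}}}]\le\epsilon$.

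To pass to Goldstein stationarity, use Proposition~\ref{prop:f-delta-smooth}(e). Each $\nabla f_\delta(\vy_t)$ lies in $\partial_\delta f(\vy_t)$. We then need these to lie in $\partial_\delta f(\bar\vy_{k_{\rm out}})$ up to the slack already built into Theorem~\ref{thm:O2NC-generic}. Within a block, $\norm{\vy_t-\bar\vy}\le MD=\delta$, since every increment satisfies $\norm{\vDelta_t}\le D=\delta/M$. Because $\partial_\delta f(\bar \vy)$ is convex and contains each $\nabla f_\delta(\vy_t)$ after enlarging the radius by a constant factor, $\mathrm{dist}(\vzero,\partial_{\delta'} f(\bar\vy))\le \norm{\bar\vg}$ for some $\delta'$ a constant multiple of $\delta$. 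This is the standard conclusion stated in Theorem~\ref{thm:O2NC-generic}, so it can be taken from there.

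For the complexity we have $T=MK=\gO\big(G^2\epsilon^{-2}\cdot(\gamma+\delta L)\delta^{-1}\epsilon^{-1}\big)$.
\begin{itemize}
\item For Option~I, expanding $G_{\rm I}^2=\gO(d^2\sigma^2\delta^{-2}+dL^2)$ and multiplying by $(\gamma+\delta L)\delta^{-1}\epsilon^{-3}$ gives the four stated terms. Each iteration makes two queries, so $N_{\rm SZO}=2T$.
\item For Option~II, $G_{\rm II}^2=\gO(d^2\sigma^2\delta^{-2}+d^2L^2)$ gives the stated terms. Each iteration makes one query, plus one initialization query, so $N_{\rm SZO}=T+1$.
\end{itemize}

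No step is a real obstacle, since everything is delegated to the earlier lemmas and the generic theorem. The only care needed is the conditional unbiasedness of the residual estimator with the reused $\mathcal{F}_{t-1}$-measurable sample, including at block boundaries where $\vDelta$ is reset. Unbiasedness there still holds because the subtracted term does not depend on $\vu_t$.
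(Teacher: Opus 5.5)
Your proposal is correct and follows the paper's proof essentially step for step. The estimator lemmas supply $G_{\rm I}$ and $G_{\rm II}$ to Theorem~\ref{thm:O2NC-generic}, each of the two error terms is bounded by $\epsilon/2$ through the choices of $M$ and $K$, and the query counts are $2T$ and $T+1$.

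The one place to tighten is the Goldstein step. Your argument as written only gives a $(2\delta,\epsilon)$ point, and you defer the rest to Theorem~\ref{thm:O2NC-generic}, whose proof never checks the subdifferential membership. Instead, do what the paper does: run with radius $\widehat{\delta}=\delta/2$. Then $\norm{\vy_t-\bar\vy_{k_{\rm out}}}\le\delta/2$, so $\nabla f_{\widehat{\delta}}(\vy_t)\in\partial_{\delta/2}f(\vy_t)\subseteq\partial_\delta f(\bar\vy_{k_{\rm out}})$, and this changes only absolute constants in the complexity.
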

Theorem~\ref{thm:O2NC-NSNC} demonstrates that one only needs to query constant or even a single SZO to find the $(\delta,\epsilon)$-Goldstein stationary point for nonsmooth nonconvex distribution dependent problems.
Moreover, the one-point estimator shares the same dominant term in SZO complexity as the two-points estimator.

\section{Extension to Smooth Non-Convex Problems}
\label{sec:smooth-extension}
We now extend the guarantees of the ZO-O2NC method to smooth non-convex problems.
We impose the following additional assumptions on $f$.
\begin{asm}\label{asm:grad-lip}
The gradient of $f$ is $L_g$-Lipschitz continuous, that is, for all $\vx,\vy\in\sR^d$, we have
\begin{align*}
    \norm{\nabla f(\vx)-\nabla f(\vy)} \le L_g \norm{\vx-\vy}.
\end{align*}
\end{asm}
\begin{asm}\label{asm:Hess-lip}
The Hessian of $f$ is $L_H$-Lipschitz continuous, that is, for all $\vx,\vy\in\sR^d$, we have
\begin{align*}
    \norm{\nabla^2 f(\vx)-\nabla^2 f(\vy)} \le L_H \norm{\vx-\vy}.
\end{align*}
\end{asm}
We use the standard notion of $\epsilon$-stationarity.
\begin{dfn}[$\epsilon$-stationary point]
A point $\vx \in \sR^d$ is $\epsilon$-stationary if $ \norm{\nabla f(\vx)} \le \epsilon.$
\end{dfn}
\citet[Propositions 14 and 15]{cutkosky2023optimal} relate
$
\Big\|
\frac{1}{M}\sum_{i=1}^{M}\nabla f(\vy_i)
\Big\|
$ and $
\Big\|\nabla f\big(\frac{1}{M}\sum_{i=1}^M\vy_i\big)\Big\|
$
under gradient Lipschitzness and Hessian Lipschitzness.
However, Theorem~\ref{thm:O2NC-NSNC} only controls $\|
\frac{1}{M}\sum_{i=1}^{M}\nabla f_{\delta}(\vy_i)
\|$
instead.
The next proposition bridges this gap by reducing the certificate based on the smoothed gradients to stationarity of the original objective.

\begin{prop}\label{prop:smooth-reduction}
Let $\vy_1,\ldots,\vy_M \in \sR^d$ satisfy $\norm{\vy_i-\bar{\vy}} \le \delta$, where $\bar{\vy}=\frac{1}{M}\sum_{i=1}^M \vy_i$.
Then the following statements hold:
\begin{enumerate}
    \item[(a)] Under Assumption \ref{asm:grad-lip},
    \begin{align*}
        \norm{\nabla f(\bar{\vy})}
        \le
        \left\|
            \frac{1}{M}\sum_{i=1}^{M}\nabla f_\delta(\vy_i)
        \right\|
        +
        2L_g\delta.
    \end{align*}
    \item[(b)] Under Assumption \ref{asm:Hess-lip},
    \begin{align*}
        \norm{\nabla f(\bar{\vy})}
        \le
        \left\|
            \frac{1}{M}\sum_{i=1}^{M}\nabla f_\delta(\vy_i)
        \right\|
        +
        L_H\delta^2.
    \end{align*}
\end{enumerate}
\end{prop}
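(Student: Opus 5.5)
The plan is to bound $\norm{\nabla f(\bar\vy)-\frac1M\sum_i\nabla f_\delta(\vy_i)}$ by the triangle inequality and then show this error is at most $2L_g\delta$ in case (a) and $L_H\delta^2$ in case (b). The first step is to note that, when $f$ is differentiable with Lipschitz gradient, smoothing commutes with differentiation:
\begin{align*}
\nabla f_\delta(\vy)=\E_{\vu\sim{\rm Unif}(\sB^d)}[\nabla f(\vy+\delta\vu)].
\end{align*}
This follows by dominated convergence, since $\nabla f$ is continuous and locally bounded. Consequently,
\begin{align*}
\frac1M\sum_{i=1}^M\nabla f_\delta(\vy_i)-\nabla f(\bar\vy)=\frac1M\sum_{i=1}^M\E_{\vu}\big[\nabla f(\vy_i+\delta\vu)-\nabla f(\bar\vy)\big].
\end{align*}

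For part (a), apply Assumption~\ref{asm:grad-lip} pointwise. Each term is bounded by $L_g\norm{\vy_i-\bar\vy+\delta\vu}\le L_g(\delta+\delta)=2L_g\delta$, using $\norm{\vy_i-\bar\vy}\le\delta$ and $\norm{\vu}\le1$. Averaging over $i$ and $\vu$ and then applying the triangle inequality gives (a).

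For part (b), a zeroth-order Lipschitz bound is not enough. Instead, Taylor-expand $\nabla f$ around $\bar\vy$ using Assumption~\ref{asm:Hess-lip}:
\begin{align*}
\nabla f(\bar\vy+\vh)=\nabla f(\bar\vy)+\nabla^2 f(\bar\vy)\vh+\vr(\vh),\qquad \norm{\vr(\vh)}\le\tfrac{L_H}{2}\norm{\vh}^2.
\end{align*}
Take $\vh=\vy_i-\bar\vy+\delta\vu$. The linear term vanishes after averaging, because $\frac1M\sum_i(\vy_i-\bar\vy)=\vzero$ by the definition of $\bar\vy$, and $\E[\vu]=\vzero$ by the symmetry of the uniform ball. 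Only the remainder survives, and it is bounded by $\frac{L_H}{2}\cdot\frac1M\sum_i\E\norm{\vy_i-\bar\vy+\delta\vu}^2$.

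The main obstacle is that the crude bound $\norm{\vh}\le2\delta$ only yields $2L_H\delta^2$. To get the constant $L_H\delta^2$, expand the square. The cross term $2\delta\inner{\vy_i-\bar\vy}{\E\vu}$ vanishes, leaving
\begin{align*}
\E\norm{\vh}^2=\norm{\vy_i-\bar\vy}^2+\delta^2\E\norm{\vu}^2\le\delta^2+\delta^2\tfrac{d}{d+2}\le 2\delta^2.
\end{align*}
The remainder is therefore at most $\frac{L_H}{2}\cdot2\delta^2=L_H\delta^2$, which gives (b).

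This plan assumes the statement implicitly requires $f$ to be differentiable with the relevant Lipschitz derivative; that hypothesis is needed to use $\nabla f$ and $\nabla^2 f$ at all. One point to check is that the remainder bound $\frac{L_H}{2}\norm{\vh}^2$ holds with the operator norm. It does, via the integral form $\vr(\vh)=\int_0^1(\nabla^2f(\bar\vy+s\vh)-\nabla^2f(\bar\vy))\vh\,ds$.
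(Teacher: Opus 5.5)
Your proof is correct and reaches exactly the paper's constants, but it is organized differently. The paper uses the triangle inequality to split the error into two pieces. The first is an averaging term $\norm{\nabla f_\delta(\bar{\vy})-\frac{1}{M}\sum_{i}\nabla f_\delta(\vy_i)}$: the paper shows that $\nabla f_\delta$ (resp.\ $\nabla^2 f_\delta$) inherits the Lipschitz constant $L_g$ (resp.\ $L_H$), then expands around $\bar{\vy}$ with displacement $\vy_i-\bar{\vy}$. The second is a smoothing-bias term $\norm{\nabla f_\delta(\bar{\vy})-\nabla f(\bar{\vy})}$, handled by expanding $\nabla f$ with displacement $\delta\vu$. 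Each piece contributes $L_g\delta$ (resp.\ $L_H\delta^2/2$). You instead expand $\nabla f$ once about $\bar{\vy}$ with the combined displacement $\vy_i-\bar{\vy}+\delta\vu$. As a result you only need the first-order identity $\nabla f_\delta=\E[\nabla f(\cdot+\delta\vu)]$, and you never differentiate twice under the integral or prove anything about $\nabla^2 f_\delta$. The price, which you correctly identified, comes in (b): the crude bound $\norm{\vy_i-\bar{\vy}+\delta\vu}\le 2\delta$ loses a factor of two, so you must use $\E[\vu]=\vzero$ a second time to remove the cross term in $\E\norm{\vy_i-\bar{\vy}+\delta\vu}^2$. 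The paper's split avoids this because each displacement is bounded by $\delta$ separately, and it also isolates the bias estimate $\norm{\nabla f_\delta(\bar{\vy})-\nabla f(\bar{\vy})}\le L_H\delta^2/2$ as a standalone, reusable fact. After your cancellation, the two routes give the identical intermediate bound $\frac{L_H}{2}\big(\frac{1}{M}\sum_i\norm{\vy_i-\bar{\vy}}^2+\delta^2\E\norm{\vu}^2\big)$.
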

With Proposition~\ref{prop:smooth-reduction}, we can now derive SZO complexity bounds for smooth and Hessian-Lipschitz decision-dependent problems.
\begin{thm}\label{thm:smooth-gradient}
Suppose Assumptions \ref{asm:estimator}, \ref{asm:lower}, \ref{asm:lip_function}, and \ref{asm:grad-lip} hold. Let $ \delta = \frac{\epsilon}{4L_g}$, 
then Algorithm~\ref{alg:O2NC-detail} with Option I or Option II can find an $\epsilon$-stationary point using $N_{\rm SZO} = \gO\left(d^2 \epsilon^{-6}\right)$ queries.
\end{thm}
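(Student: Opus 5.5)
The plan is to combine Theorem~\ref{thm:O2NC-NSNC} with Proposition~\ref{prop:smooth-reduction}(a). We run Algorithm~\ref{alg:O2NC-detail} (Option I or II) with smoothing radius $\delta=\epsilon/(4L_g)$ and target accuracy $\epsilon/2$ in place of $\epsilon$. That is, we take $M=\lceil 64G^2/\epsilon^2\rceil$ and $T=M\lceil 4(\gamma+\delta L)/(\delta\epsilon)\rceil$, where $G=G_{\rm I}$ or $G_{\rm II}$. Theorem~\ref{thm:O2NC-NSNC} then gives $\E[\norm{\bar\vg_{k_{\rm out}}}]\le\epsilon/2$. The output is $\bar\vy_{k_{\rm out}}$, the average of the block iterates, and the claim is that $\nabla f(\bar\vy_{k_{\rm out}})$ is small in expectation.

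The first thing to check is the hypothesis $\norm{\vy_t-\bar\vy}\le\delta$ of Proposition~\ref{prop:smooth-reduction}, for $t$ in block $k$. Within a block, $\vx_t$ starts at $\vx_{(k-1)M}$ and moves by $\vDelta_t$ with $\norm{\vDelta_t}\le D=\delta/M$. Hence every $\vy_t=\vx_{t-1}+s_t\vDelta_t$ lies in the ball $\sB(\vx_{(k-1)M},\delta)$; more precisely, within $(m-1)D+D\le MD=\delta$ of that center. This gives $\norm{\vy_t-\vy_{t'}}\le \delta$ for any two points in the block, since each is a path sum of at most $M$ steps of size at most $D$ with nonnegative weights $\le 1$ along a common path. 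Because $\bar\vy$ is a convex combination of the $\vy_{t'}$, convexity of the norm gives $\norm{\vy_t-\bar\vy}\le\max_{t'}\norm{\vy_t-\vy_{t'}}\le\delta$. One detail needs care: the bound $\norm{\vy_t-\vy_{t'}}\le\delta$ rather than $2\delta$. I would verify it by writing $\vy_t-\vy_{t'}$ as a telescoping sum over the block's $\vDelta$'s with coefficients in $[-1,1]$ supported on at most $M$ indices. If only $2\delta$ came out, I would instead shrink $\delta$ by a constant, which does not affect the rate.

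With the hypothesis verified, Proposition~\ref{prop:smooth-reduction}(a) applied pathwise gives
\[
\norm{\nabla f(\bar\vy_{k_{\rm out}})}\le\norm{\bar\vg_{k_{\rm out}}}+2L_g\delta=\norm{\bar\vg_{k_{\rm out}}}+\epsilon/2 .
\]
Taking expectations yields $\E\norm{\nabla f(\bar\vy_{k_{\rm out}})}\le\epsilon$.

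For the complexity count, substitute $\delta=\Theta(\epsilon)$ into the $N_{\rm SZO}$ bounds of Theorem~\ref{thm:O2NC-NSNC}, treating $\gamma,\sigma,L,L_g$ as constants.
\begin{itemize}
\item The leading term is $d^2\gamma\sigma^2\delta^{-3}\epsilon^{-3}=\gO(d^2\epsilon^{-6})$.
\item The term $d^2L\sigma^2\delta^{-2}\epsilon^{-3}$ becomes $\gO(d^2\epsilon^{-5})$.
\item The term $d^2L^2\gamma\delta^{-1}\epsilon^{-3}$ becomes $\gO(d^2\epsilon^{-4})$, and the remaining terms are lower order still.
\end{itemize}
Both options therefore use $\gO(d^2\epsilon^{-6})$ queries; the factor-$2$ rescaling of $\epsilon$ only changes constants. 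The only genuine obstacle is the geometric diameter bound in the previous paragraph; the rest is bookkeeping.
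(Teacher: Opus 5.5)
Your proposal is correct and follows essentially the same route as the paper. You run at target accuracy $\epsilon/2$, invoke Theorem~\ref{thm:O2NC-NSNC}, apply Proposition~\ref{prop:smooth-reduction}(a) pathwise with $2L_g\delta=\epsilon/2$, and substitute $\delta=\Theta(\epsilon)$ into the complexity bounds. Your telescoping argument (coefficients $1-s_{t'},1,\ldots,1,s_t$ on at most $M$ increments of norm at most $D=\delta/M$) correctly proves the within-block bound $\norm{\vy_t-\bar\vy_{k_{\rm out}}}\le\delta$, which the paper only asserts, so the fallback of shrinking $\delta$ is not needed.
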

\begin{thm}\label{thm:smooth-hessian}
Suppose Assumptions \ref{asm:estimator}, \ref{asm:lower}, \ref{asm:lip_function}, and \ref{asm:Hess-lip} hold. Let $ \delta = \sqrt{\frac{\epsilon}{2L_H}}$, then 
Algorithm~\ref{alg:O2NC-detail} with Option I or Option II can find an $\epsilon$-stationary point using $ N_{\rm SZO} = \gO\left(d^2\epsilon^{-9/2}\right)$ queries.

\end{thm}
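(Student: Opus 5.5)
The plan is to combine Theorem~\ref{thm:O2NC-NSNC}, which controls $\E[\norm{\bar\vg_{k_{\rm out}}}]$, with Proposition~\ref{prop:smooth-reduction}(b), and then choose $\delta$ to balance the two error terms.

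\textbf{Step 1: applicability of the reduction.} Proposition~\ref{prop:smooth-reduction}(b) requires $\norm{\vy_t-\bar\vy_{k}}\le\delta$ for every $t$ in block $k$. Within a block, $\vDelta$ is reset to $\vzero$ and each $\norm{\vDelta_t}\le D=\delta/M$. Hence any two points $\vy_t,\vy_{t'}$ in the same block differ by at most the sum of $M$ increments of norm at most $D$, i.e.\ by at most $MD=\delta$. Since $\bar\vy_k$ is a convex combination of the block's points, $\norm{\vy_t-\bar\vy_k}\le\delta$ follows.

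\textbf{Step 2: the stationarity bound.} Applying Proposition~\ref{prop:smooth-reduction}(b) pointwise and taking expectations gives
\begin{align*}
\E\norm{\nabla f(\bar\vy_{k_{\rm out}})}\le \E\norm{\bar\vg_{k_{\rm out}}}+L_H\delta^2 .
\end{align*}
With $\delta=\sqrt{\epsilon/(2L_H)}$ we get $L_H\delta^2=\epsilon/2$. Running Theorem~\ref{thm:O2NC-NSNC} (either option) with target accuracy $\epsilon/2$ then yields $\E\norm{\nabla f(\bar\vy_{k_{\rm out}})}\le\epsilon$.

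\textbf{Step 3: complexity accounting.} Substitute $\delta=\Theta(\epsilon^{1/2})$ into the bound of Theorem~\ref{thm:O2NC-NSNC}, with $\epsilon$ replaced by $\epsilon/2$, treating $\gamma,L,\sigma,L_H$ as constants:
\begin{itemize}
\item $d^2\gamma\sigma^2\delta^{-3}\epsilon^{-3}=d^2\epsilon^{-3/2}\epsilon^{-3}=d^2\epsilon^{-9/2}$, which is the dominant term;
\item $d^2L\sigma^2\delta^{-2}\epsilon^{-3}=\gO(d^2\epsilon^{-4})$;
\item the $L^2$ terms are at most $d^2\delta^{-1}\epsilon^{-3}=\gO(d^2\epsilon^{-7/2})$, plus $\gO(d^2\epsilon^{-3})$.
\end{itemize}
All of these are $\gO(d^2\epsilon^{-9/2})$ for $\epsilon\le 1$.

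\textbf{Main obstacle.} The only delicate point is confirming the diameter bound in Step~1. It needs the block reset and the projection onto $\sB(\vzero,D)$, and it must hold deterministically, so that Proposition~\ref{prop:smooth-reduction} applies before taking expectations. One must also check that $\gamma$ is defined with $f$ rather than $f_\delta$, which is harmless here. The rest is direct substitution.
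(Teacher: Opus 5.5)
Your proposal is correct and follows essentially the same route as the paper. Like the paper, you run Theorem~\ref{thm:O2NC-NSNC} at accuracy $\epsilon/2$, use the deterministic block-diameter bound $\norm{\vy_t-\bar\vy_{k_{\rm out}}}\le MD=\delta$, and apply Proposition~\ref{prop:smooth-reduction}(b) with $L_H\delta^2=\epsilon/2$; substituting $\delta=\Theta(\epsilon^{1/2})$ then makes the $d^2\gamma\sigma^2\delta^{-3}\epsilon^{-3}$ term dominate, giving $\gO(d^2\epsilon^{-9/2})$. The one difference is that you derive the diameter bound explicitly from the block reset and the projection onto $\sB(\vzero,D)$, whereas the paper only asserts it.
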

\paragraph{Comparison with \citet{hikima2025zeroth}.}
\citet{hikima2025zeroth} also studied these two settings.
Compared with their method, ZO-O2NC has two main advantages:
\begin{itemize}
    \item Their method requires a large mini-batch, with batch size $N=\mathcal{O}(d^2\epsilon^{-4})$ per iteration, whereas ZO-O2NC uses only a constant number of SZO queries per iteration, and even only one fresh query under Option II.
    \item Under Hessian Lipschitzness, their method requires $\mathcal{O}(d^{2}\epsilon^{-5})$ SZO complexity, while ZO-O2NC only requires $\mathcal{O}(d^2\epsilon^{-9/2})$ queries, improving the dependence on $\epsilon$ by a factor of $\epsilon^{-1/2}$.
\end{itemize}
\begin{rmk}
 In Appendix~\ref{app:sgd-baseline}, we also prove that SGD with mini-batch two-point estimator~\cite{hikima2025zeroth} can find the $(\delta,\epsilon)$-Goldstein stationary point by directly analyzing the descent of $f_{\delta}(\cdot)$.
Their method has an SZO complexity of $\mathcal{O}(d^{5/2}\delta^{-3}\epsilon^{-4})$, which is worse than ZO-O2NC (Algorithm~\ref{alg:O2NC-detail}).
\end{rmk}

%% file: Additional_related.tex

%% file: appendix.tex
\section{The Proof of Section~\ref{sec:preliminaries}}
\label{app:proof-sec2}

\subsection{The Proof of Proposition \ref{prop:f-delta-smooth}}
\label{app:proof-prop-f-delta-smooth}
\begin{proof}
Part (a) follows from Assumption \ref{asm:lip_function}:
\begin{align*}
    |f_\delta(\vx)-f(\vx)|
    &=
    \left|
        \E_{\vu \sim {\rm Unif}(\sB^d)}
        \big[
            f(\vx+\delta\vu)-f(\vx)
        \big]
    \right| \\
    &\le
    \E\big[
        |f(\vx+\delta\vu)-f(\vx)|
    \big]
    \le
    L\delta \E[\norm{\vu}]
    \le
    L\delta.
\end{align*}
Similarly, for any $\vx,\vy \in \sR^d$,
\begin{align*}
    |f_\delta(\vx)-f_\delta(\vy)|
    &=
    \left|
        \E_{\vu \sim {\rm Unif}(\sB^d)}
        \big[
            f(\vx+\delta\vu)-f(\vy+\delta\vu)
        \big]
    \right| \\
    &\le
    \E\big[
        |f(\vx+\delta\vu)-f(\vy+\delta\vu)|
    \big]
    \le
    L\norm{\vx-\vy},
\end{align*}
which proves part (b).

Parts (c)--(e) are standard consequences of randomized smoothing over Euclidean balls for Lipschitz functions; see, for example, \citet{duchi2012randomized,zhang2020complexity}. In particular, $f_\delta$ is differentiable on $\sR^d$, $\nabla f_\delta$ is $(c\sqrt{d}L/\delta)$-Lipschitz continuous for some universal constant $c>0$, and $\nabla f_\delta(\vx) \in \partial_\delta f(\vx)$ for all $\vx \in \sR^d$.
\end{proof}

\subsection{The Proof of Theorem \ref{thm:O2NC-generic}}
\label{app:proof-thm-o2nc-generic}
\begin{proof}
For a sequence $\{\vDelta_t\}_{t=1}^{T}$ and an estimator sequence $\{\vg_t\}_{t=1}^{T}$, define the block regret
\begin{align*}
    {\rm Reg}_{K,M}
    \triangleq
    \sup_{\vu_1,\ldots,\vu_K \in \sB(\vzero,D)}
    \sum_{k=1}^K
    \sum_{t=(k-1)M+1}^{kM}
    \inner{\vg_t}{\vDelta_t-\vu_k}.
\end{align*}
For each block $k \in \{1,\ldots,K\}$, define
\begin{align*}
    \bar \vg_k
    \triangleq
    \frac{1}{M}\sum_{t=(k-1)M+1}^{kM}\nabla f_\delta(\vy_t).
\end{align*}
Let $\zeta_t\triangleq\vg_t-\nabla f_\delta(\vy_t)$. Then $\E[\zeta_t\mid \mathcal{F}_{t-1}]=\vzero$, and
\begin{align*}
    \E[\norm{\zeta_t}^2]
    =
    \E[\norm{\vg_t}^2]-\E[\norm{\nabla f_\delta(\vy_t)}^2]
    \le G^2.
\end{align*}
By \citet[Theorem 4]{cutkosky2023optimal},
\begin{align*}
   & \frac{1}{K}
    \sum_{k=1}^K
    \E
    \left[
        \left\|
            \frac{1}{M}\sum_{t=(k-1)M+1}^{kM}\nabla f_\delta(\vy_t)
        \right\|
    \right]
    \\
    &\le
    \frac{f_\delta(\vx_0)-f_\delta^*}{KDM}
    +
    \frac{\E[{\rm Reg}_{K,M}]}{KDM}
    +
    \frac{1}{K}
    \sum_{k=1}^K
    \E
    \left[
        \left\|
            \frac{1}{M}\sum_{t=(k-1)M+1}^{kM}\zeta_t
        \right\|
    \right].
\end{align*}
Since $\{\zeta_t\}_{t=1}^T$ is a martingale difference sequence,
\begin{align*}
    \E
    \left[
        \left\|
            \frac{1}{M}\sum_{t=(k-1)M+1}^{kM}\zeta_t
        \right\|
    \right]
    \le
    \left(
        \E
        \left[
            \left\|
                \frac{1}{M}\sum_{t=(k-1)M+1}^{kM}\zeta_t
            \right\|^2
        \right]
    \right)^{1/2}
    \le
    \frac{G}{\sqrt{M}}.
\end{align*}
Moreover, the standard OGD bound yields
\begin{align*}
    \E[{\rm Reg}_{K,M}]
    \le
    \frac{KD^2}{2\eta}
    +
    \frac{\eta KMG^2}{2}
    =
    KDG\sqrt{M}.
\end{align*}
Substituting this bound and $\eta=D/(G\sqrt{M})$ into the preceding display gives
\begin{align*}
    \frac{1}{K}
    \sum_{k=1}^K
    \E[\norm{\bar \vg_k}]
    =
    \gO
    \left(
        \frac{f_\delta(\vx_0)-f_\delta^*}{KDM}
        +
        \frac{2G}{\sqrt{M}}
    \right).
\end{align*}
Since $k_{\rm out}$ is sampled uniformly from $\{1,\ldots,K\}$,
\begin{align*}
    \E[\norm{\bar \vg_{k_{\rm out}}}]
    =
    \frac{1}{K}\sum_{k=1}^K \E[\norm{\bar \vg_k}]
    =
    \gO
    \left(
        \frac{f_\delta(\vx_0)-f_\delta^*}{KDM}
        +
        \frac{2G}{\sqrt{M}}
    \right).
\end{align*}
Finally, Assumption \ref{asm:lower} and Proposition \ref{prop:f-delta-smooth}(a) imply
\begin{align*}
    f_\delta(\vx_0)-f_\delta^*
    \le
    f(\vx_0)-f^*+L\delta
    \le
    \gamma+\delta L.
\end{align*}
Using $D=\delta/M$ and $T=KM$ gives
\begin{align*}
    \E[\norm{\bar \vg_{k_{\rm out}}}]
    \le
    \frac{M(\gamma+\delta L)}{\delta T}
    +
    \frac{2G}{\sqrt{M}}.
\end{align*}
Therefore, by choosing $M$ and $T$ as in the theorem statement, we obtain
\begin{align*}
    \E[\norm{\bar \vg_{k_{\rm out}}}]
    \le
    \epsilon.
\end{align*}
Moreover,
\begin{align*}
    T
    =
    M
    \left\lceil
        \frac{2(\gamma+\delta L)}{\delta\epsilon}
    \right\rceil
    =
    \gO
    \left(
        \frac{(\gamma+\delta L)G^2}{\delta\epsilon^3}
    \right).
\end{align*}
\end{proof}

\section{The Proof of Section~\ref{sec:o2nc-framework}}
\label{app:proof-sec3}

\begin{lem}[Variance bound for Lipschitz functions on the sphere]
\label{lem:sphere-variance}
Let $h:\sS^{d-1}\to \sR$ be $\rho$-Lipschitz, and let $\vu \sim {\rm Unif}(\sS^{d-1})$. Then
\begin{align*}
    \Var(h(\vu))
    \le
    \frac{16\sqrt{2\pi}\rho^2}{d}.
\end{align*}
\end{lem}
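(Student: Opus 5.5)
The plan is to derive the variance bound from a sub-Gaussian tail inequality for Lipschitz functions of a uniform point on the sphere, and then integrate the tail. The constant $16\sqrt{2\pi}$ suggests aiming for the concentration inequality in the form
\begin{align*}
    \Pr\big(|h(\vu)-\E[h(\vu)]| \ge t\big) \le 2\sqrt{2\pi}\,\exp\Big(-\frac{d t^2}{8\rho^2}\Big), \qquad t\ge 0,
\end{align*}
which is the classical measure-concentration statement on $\sS^{d-1}$. It follows from L\'evy's isoperimetric inequality, or from the Gaussian comparison argument recorded in, e.g., Ledoux's monograph and used in the zeroth-order literature by Shamir and by Duchi et al. I would invoke this as a known result rather than reprove it.

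Granting this tail bound, the computation is routine. Write $Z = h(\vu)-\E[h(\vu)]$. By the layer-cake formula,
\begin{align*}
    \Var(h(\vu)) = \E[Z^2] = \int_0^\infty 2t\,\Pr(|Z|\ge t)\,dt \le 4\sqrt{2\pi}\int_0^\infty t\,e^{-dt^2/(8\rho^2)}\,dt = 4\sqrt{2\pi}\cdot\frac{4\rho^2}{d} = \frac{16\sqrt{2\pi}\rho^2}{d}.
\end{align*}
This is exactly the claimed bound. Note that $h$ is continuous on a compact set, so all moments are finite and the integral identity is justified.

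The only real obstacle is pinning down the tail inequality with precisely these constants. The prefactor $2\sqrt{2\pi}$ and the exponent $d/(8\rho^2)$ should be stated about the mean, not the median; the median version is the one usually quoted from L\'evy. If a citable mean-centered version with these constants is not available, I would proceed in two steps.
\begin{enumerate}
    \item Start from the median version, $\Pr(|h-m_h|\ge t)\le 2e^{-(d-2)t^2/(2\rho^2)}$ or a similar form.
    \item Pass from the median to the mean by bounding $|\E h - m_h|\le \E|h-m_h| = O(\rho/\sqrt d)$ and absorbing the shift into the constant. Alternatively, bound $\Var(h)\le \E[(h-m_h)^2]$ directly and integrate the median-centered tail.
\end{enumerate}
The second route may give a different (possibly smaller) numerical constant, and any constant at most $16\sqrt{2\pi}$ suffices for the lemma as stated. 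The place where I expect to spend care is the slack in the dimension factor, $d-2$ versus $d$, for small $d$. I would handle $d\le 2$ separately using the trivial bound $\Var(h)\le \rho^2\,\mathrm{diam}(\sS^{d-1})^2/4=\rho^2$. This is at most $16\sqrt{2\pi}\rho^2/d$ whenever $d\le 16\sqrt{2\pi}$, which covers all small $d$.
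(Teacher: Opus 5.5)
Your proposal is correct and is essentially the paper's proof. The paper also cites the mean-centered tail bound $\Pr(|h(\vu)-\E[h(\vu)]|\ge \alpha)\le 2\sqrt{2\pi}\exp(-\alpha^2 d/(8\rho^2))$ and integrates it; your $\int_0^\infty 2t\Pr(|Z|\ge t)\,dt$ is its $\int_0^\infty \Pr(Z^2\ge \alpha)\,d\alpha$ after substituting $\alpha=t^2$, so the median-to-mean fallback is unnecessary, and the paper's separate $d=1$ case uses the same trivial bound $\Var(h(\vu))\le\rho^2$ that you use for small $d$.
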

\begin{proof}
If $d=1$, then $\sS^0=\{-1,1\}$ and $\vu$ is uniform on this two-point set. Since $h$ is $\rho$-Lipschitz,
\begin{align*}
    |h(1)-h(-1)| \le \rho |1-(-1)| = 2\rho.
\end{align*}
Therefore,
\begin{align*}
    \Var(h(\vu))
    =
    \frac{(h(1)-h(-1))^2}{4}
    \le
    \rho^2
    \le
    \frac{16\sqrt{2\pi}\rho^2}{d}.
\end{align*}
\noindent Suppose now that $d\ge2$. Following the proof of Lemma D.1 in \citet{lin2022gradient}, one has the concentration inequality~\cite{wainwright2019high}:
\begin{align*}
    \Pr\Big(
        \big|h(\vu)-\E[h(\vu)]\big|
        \ge
        \alpha
    \Big)
    \le
    2\sqrt{2\pi}
    \exp\left(
        -\frac{\alpha^2 d}{8\rho^2}
    \right),
    \qquad
    \forall \alpha \ge 0.
\end{align*}
Therefore,
\begin{align*}
    \Var(h(\vu))
    &=
    \E\Big[
        \big(h(\vu)-\E[h(\vu)]\big)^2
    \Big] \\
    &=
    \int_0^\infty
    \Pr\Big(
        \big(h(\vu)-\E[h(\vu)]\big)^2
        \ge
        \alpha
    \Big)\,d\alpha \\
    &=
    \int_0^\infty
    \Pr\Big(
        \big|h(\vu)-\E[h(\vu)]\big|
        \ge
        \sqrt{\alpha}
    \Big)\,d\alpha \\
    &\le
    2\sqrt{2\pi}
    \int_0^\infty
    \exp\left(
        -\frac{\alpha d}{8\rho^2}
    \right)\,d\alpha \\
    &=
    \frac{16\sqrt{2\pi}\rho^2}{d}.
\end{align*}
\end{proof}

\subsection{The Proof of Lemma \ref{lem:zo-var}}
\label{app:proof-lem-zo-var}
\begin{proof}
Write
\begin{align*}
    \vg_{\text{2pt},t} \triangleq \vg_{\text{2pt}}(\vy_t,\vu_t,\xi_t^{+},\xi_t^{-}),
    \qquad
    S_t \triangleq f(\vy_t+\delta\vu_t)-f(\vy_t-\delta\vu_t).
\end{align*}
The expectation identity is standard for randomized smoothing; see, for example, \citet{duchi2012randomized,shamir2017optimal}. Since $\vy_t$ is $\mathcal{F}_{t-1}$-measurable,
\begin{align*}
    \E[\vg_{\text{2pt},t}\mid \mathcal{F}_{t-1}]
    =
    \frac{d}{2\delta}
    \E_{\vu \sim {\rm Unif}(\sS^{d-1})}
    \big[
        \big(
            f(\vy_t+\delta\vu)-f(\vy_t-\delta\vu)
        \big)\vu
    \big]
    =
    \nabla f_\delta(\vy_t).
\end{align*}
Equivalently,
\begin{align*}
    \E[S_t \vu_t \mid \mathcal{F}_{t-1}] = \frac{2\delta}{d}\nabla f_\delta(\vy_t).
\end{align*}
Let
\begin{align*}
    Z_t^+ \triangleq F(\vy_t+\delta\vu_t;\xi_t^+) - f(\vy_t+\delta\vu_t),
    \qquad
    Z_t^- \triangleq F(\vy_t-\delta\vu_t;\xi_t^-) - f(\vy_t-\delta\vu_t).
\end{align*}
Since $\norm{\vu_t}=1$,
\begin{align}
    \label{eq:E_2_point_est_mid}
    \begin{split}
   & \E\big[
        \norm{\vg_{\text{2pt},t}}^2
        \mid
        \mathcal{F}_{t-1}
    \big]\\
    &=
    \frac{d^2}{4\delta^2}
    \E\big[
        |Z_t^+-Z_t^-+S_t|^2
        \mid
        \mathcal{F}_{t-1}
    \big] \\
    &=
    \frac{d^2}{4\delta^2}
    \E\big[ |Z_t^+-Z_t^-|^2 \mid \mathcal{F}_{t-1} \big]
    +
    \frac{d^2}{4\delta^2}
    \E\big[ |S_t|^2 \mid \mathcal{F}_{t-1} \big] 
    +
    \frac{d^2}{2\delta^2}
    \E\big[ (Z_t^+-Z_t^-)S_t \mid \mathcal{F}_{t-1} \big].
    \end{split}
\end{align}
We claim that the last term is zero. Indeed, $S_t$ is measurable with respect to $(\mathcal{F}_{t-1},\vu_t)$, while
\begin{align*}
    \E[Z_t^+\mid \mathcal{F}_{t-1},\vu_t] = 0,
    \qquad
    \E[Z_t^-\mid \mathcal{F}_{t-1},\vu_t] = 0.
\end{align*}
Therefore,
\begin{align*}
    \E\big[ (Z_t^+-Z_t^-)S_t \mid \mathcal{F}_{t-1} \big]
    =
    \E\Big[
        S_t
        \E[ Z_t^+-Z_t^- \mid \mathcal{F}_{t-1},\vu_t ]
        \,\Big|\,
        \mathcal{F}_{t-1}
    \Big]
    =
    0.
\end{align*}
Hence
\begin{align*}
    \E\big[
        \norm{\vg_{\text{2pt},t}}^2
        \mid
        \mathcal{F}_{t-1}
    \big]
    =
    \frac{d^2}{4\delta^2}
    \E\big[ |Z_t^+-Z_t^-|^2 \mid \mathcal{F}_{t-1} \big]
    +
    \frac{d^2}{4\delta^2}
    \E\big[ |S_t|^2 \mid \mathcal{F}_{t-1} \big].
\end{align*}
Assumption \ref{asm:estimator} gives
\begin{align*}
    \E\big[ |Z_t^+-Z_t^-|^2 \mid \mathcal{F}_{t-1} \big]
    =
    \E\big[ |Z_t^+|^2+|Z_t^-|^2 \mid \mathcal{F}_{t-1} \big]
    \le
    2\sigma^2.
\end{align*}
Finally, we bound the term $\E[|S_t|^2|\mathcal{F}_{t-1}]$. We define
\begin{align*}
    \psi_t(\vu)
    \triangleq
    f(\vy_t+\delta\vu)-f(\vy_t-\delta\vu),
\end{align*}
where  $\vu \in \sS^{d-1}$.
Then $\psi_t$ is odd, so $\E_{\vu \sim {\rm Unif}(\sS^{d-1})}[\psi_t(\vu)]=0$. Moreover, Assumption \ref{asm:lip_function} implies
\begin{align*}
    |\psi_t(\vu)-\psi_t(\vv)|
    \le
    |f(\vy_t+\delta\vu)-f(\vy_t+\delta\vv)|
    +
    |f(\vy_t-\delta\vu)-f(\vy_t-\delta\vv)|
    \le
    2\delta L \norm{\vu-\vv},
\end{align*}
so $\psi_t$ is $(2\delta L)$-Lipschitz on $\sS^{d-1}$. Therefore, Lemma \ref{lem:sphere-variance} yields
\begin{align*}
    \E\big[ |S_t|^2 \mid \mathcal{F}_{t-1} \big]
    =
    \Var(\psi_t(\vu_t)\mid\mathcal{F}_{t-1})
    \le
    \frac{64\sqrt{2\pi}\delta^2L^2}{d}.
\end{align*}
Putting these bound together in \eqref{eq:E_2_point_est_mid}, we obtain
\begin{align*}
    \E\big[ \norm{\vg_{\text{2pt},t}}^2 \mid \mathcal{F}_{t-1} \big]
    \le
    \frac{d^2\sigma^2}{2\delta^2}
    +
    16\sqrt{2\pi}dL^2.
\end{align*}
\end{proof}

\subsection{The Proof of Lemma \ref{lem:one-point-residual}}
\label{app:proof-lem-one-point-residual}
\begin{proof}
We write
\begin{align*}
    \vg_{\text{1pt},t}
    \triangleq
    \vg_{\text{1pt}}(\vy_t,\vu_t,\xi_t;\vy_{t-1},\vu_{t-1},\xi_{t-1}),
\end{align*}
and let
\begin{align*}
    &F_t^+ \triangleq F(\vy_t+\delta\vu_t;\xi_t),
    \qquad
    F_{t-1}^+ \triangleq F(\vy_{t-1}+\delta\vu_{t-1};\xi_{t-1}), \\
    &f_t^+ \triangleq f(\vy_t+\delta\vu_t),
    \qquad
    f_{t-1}^+ \triangleq f(\vy_{t-1}+\delta\vu_{t-1}).
\end{align*}
Since $\vu_t$ and $\xi_t$ are independent of $\mathcal{F}_{t-1}$, with $\E[\vu_t]=\vzero$, we have
\begin{align*}
    \E[\vg_{\text{1pt},t} \mid \mathcal{F}_{t-1}]
    &=
    \frac{d}{\delta}
    \E\left[
        F_t^+ \vu_t - F_{t-1}^+\vu_t
        \mid
        \mathcal{F}_{t-1}
    \right] \\
    &=
    \frac{d}{\delta}
    \E\left[
        f_t^+ \vu_t
        \mid
        \mathcal{F}_{t-1}
    \right] \\
    &=
    \frac{d}{\delta}
    \E_{\vu \sim {\rm Unif}(\sS^{d-1})}
    \left[
        f(\vy_t+\delta\vu)\vu
    \right]
    =
    \nabla f_\delta(\vy_t),
\end{align*}
where the last identity is the standard randomized-smoothing gradient formula.

We define
\begin{align*}
    Z_t
    \triangleq
    F_t^+-f_t^+,
    \qquad
    Z_{t-1}
    \triangleq
    F_{t-1}^+-f_{t-1}^+.
\end{align*}
Since $\norm{\vu_t}=1$,
\begin{align*}
    \E[\norm{\vg_{\text{1pt},t}}^2]
    &=
    \frac{d^2}{\delta^2}
    \E\left[
        |Z_t-Z_{t-1}+f_t^+-f_{t-1}^+|^2
    \right].
\end{align*}
Let
\begin{align*}
    A_t \triangleq f_t^+-f_{t-1}^+.
\end{align*}
Using $(a+b+c)^2 \le 3a^2+3b^2+3c^2$, we obtain
\begin{align*}
    \E[\norm{\vg_{\text{1pt},t}}^2]
    &=
    \frac{d^2}{\delta^2}
    \E\left[
        |Z_t-Z_{t-1}+A_t|^2
    \right] \\
    &\le
    \frac{d^2}{\delta^2}
    \left(
        3\E[|Z_t|^2]
        +
        3\E[|Z_{t-1}|^2]
        +
        3\E[|A_t|^2]
    \right).
\end{align*}

Assumption \ref{asm:estimator} yields
\begin{align*}
    \E[|Z_t|^2] \le \sigma^2,
    \qquad
    \E[|Z_{t-1}|^2] \le \sigma^2.
\end{align*}
Hence
\begin{align*}
    \E[\norm{\vg_{\text{1pt},t}}^2]
    \le
    \frac{6d^2\sigma^2}{\delta^2}
    +
    \frac{3d^2}{\delta^2}
    \E\left[
        |f_t^+-f_{t-1}^+|^2
    \right].
\end{align*}

Define the sphere-smoothed mean
\begin{align*}
    f_{\delta}^{\sS}(\vy)
    \triangleq
    \E_{\vu \sim {\rm Unif}(\sS^{d-1})}
    \big[
        f(\vy+\delta\vu)
    \big].
\end{align*}
Then
\begin{align*}
    \E\left[
        |f_t^+-f_{t-1}^+|^2
        \mid
        \mathcal{F}_{t-1}
    \right]
    &=
    \Var(f_t^+\mid \mathcal{F}_{t-1})
    +
    \big(
        f_{\delta}^{\sS}(\vy_t)-f_{t-1}^+
    \big)^2 \\
    &\le
    \Var(f_t^+\mid \mathcal{F}_{t-1})
    +
    2\big(
        f_{\delta}^{\sS}(\vy_t)-f_{\delta}^{\sS}(\vy_{t-1})
    \big)^2
    +
    2\big(
        f_{\delta}^{\sS}(\vy_{t-1})-f_{t-1}^+
    \big)^2.
\end{align*}
Taking expectations gives
\begin{align*}
    &\E\left[
        |f_t^+-f_{t-1}^+|^2
    \right]\\
    &\le
    \E\big[\Var(f_t^+\mid \mathcal{F}_{t-1})\big] 
    +
    2\E\left[
        \big(
            f_{\delta}^{\sS}(\vy_t)-f_{\delta}^{\sS}(\vy_{t-1})
        \big)^2
    \right] 
    +
    2\E\left[
        \big(
            f_{\delta}^{\sS}(\vy_{t-1})-f_{t-1}^+
        \big)^2
    \right].
\end{align*}
Now, for a fixed $\vy \in \sR^d$, the map
\begin{align*}
    \phi_{\vy}(\vu)
    \triangleq
    f(\vy+\delta\vu),
    \qquad
    \vu \in \sS^{d-1},
\end{align*}
is $(\delta L)$-Lipschitz on $\sS^{d-1}$. Hence, Lemma \ref{lem:sphere-variance} yields
\begin{align*}
    \Var(\phi_{\vy}(\vu))
    \le
    \frac{16\sqrt{2\pi}\delta^2L^2}{d}.
\end{align*}
Therefore,
\begin{align*}
    \E\big[\Var(f_t^+\mid \mathcal{F}_{t-1})\big]
    \le
    \frac{16\sqrt{2\pi}\delta^2L^2}{d},
    \qquad
    \E\left[
        \big(
            f_{\delta}^{\sS}(\vy_{t-1})-f_{t-1}^+
        \big)^2
    \right]
    \le
    \frac{16\sqrt{2\pi}\delta^2L^2}{d}.
\end{align*}
Moreover, Assumption \ref{asm:lip_function} implies that $f_{\delta}^{\sS}$ is $L$-Lipschitz such that
\begin{align*}
    \big|
        f_{\delta}^{\sS}(\vy_t)-f_{\delta}^{\sS}(\vy_{t-1})
    \big|
    \le
    L\norm{\vy_t-\vy_{t-1}}.
\end{align*}
Substituting these bounds into the preceding display yields
\begin{align*}
    \E\left[
        |f_t^+-f_{t-1}^+|^2
    \right]
    \le
    \frac{48\sqrt{2\pi}\delta^2L^2}{d}
    +
    2L^2\E[\norm{\vy_t-\vy_{t-1}}^2].
\end{align*}
Consequently,
\begin{align*}
    \E[\norm{\vg_{\text{1pt},t}}^2]
    \le
    \frac{6d^2\sigma^2}{\delta^2}
    +
    144\sqrt{2\pi}dL^2
    +
    \frac{6d^2L^2}{\delta^2}
    \E[\norm{\vy_t-\vy_{t-1}}^2].
\end{align*}
\end{proof}

\subsection{The Proof of Lemma \ref{lem:one-point-o2nc-moment}}
\label{app:proof-lem-one-point-o2nc-moment}
\begin{proof}
By Lemma \ref{lem:one-point-residual}, it remains to bound $\norm{\vy_t-\vy_{t-1}}$. Since $\vDelta_1=\vzero$, we have $\vy_1=\vx_0=\vy_0$. For $t \ge 2$,
\begin{align*}
    \vy_t-\vy_{t-1}
    =
    s_t\vDelta_t+(1-s_{t-1})\vDelta_{t-1},
\end{align*}
and therefore
\begin{align*}
    \norm{\vy_t-\vy_{t-1}}
    \le
    \norm{\vDelta_t}+\norm{\vDelta_{t-1}}
    \le
    2D.
\end{align*}
\begin{align*}
    \E[\norm{\vg_t}^2]
    \le
    \frac{6d^2\sigma^2}{\delta^2}
    +
    144\sqrt{2\pi}dL^2
    +
    \frac{24d^2L^2D^2}{\delta^2}.
\end{align*}
Since $D=\delta/M$, $M\ge1$, and $d\ge1$, we further have
\begin{align*}
    144\sqrt{2\pi}dL^2+\frac{24d^2L^2D^2}{\delta^2}
    \le
    144\sqrt{2\pi}d^2L^2+24d^2L^2
    \le
    385d^2L^2.
\end{align*}
\end{proof}

\subsection{The Proof of Theorem \ref{thm:O2NC-NSNC}}
\label{app:proof-thm-o2nc-nsnc}
\begin{proof}
Define
\begin{align*}
    \bar \vg_{k_{\rm out}}
    \triangleq
    \frac{1}{M}
    \sum_{t=(k_{\rm out}-1)M+1}^{k_{\rm out}M}
    \nabla f_\delta(\vy_t).
\end{align*}

\textbf{Option I.}
Lemma \ref{lem:zo-var} gives
\begin{align*}
    \E\big[
        \norm{\vg_{\text{\rm 2pt}}(\vy_t,\vu_t,\xi_t^{+},\xi_t^{-})}^2
    \big]
    \le
    G_{\rm I}^2
    \triangleq
    \frac{d^2\sigma^2}{2\delta^2}
    +
    \aireplace{16\sqrt{2\pi}dL^2}{2dL^2}.
\end{align*}
Applying Theorem \ref{thm:O2NC-generic} with $G=G_{\rm I}$ yields
\begin{align*}
    \E[\norm{\bar \vg_{k_{\rm out}}}]
    \le
    \frac{M(\gamma+\delta L)}{\delta T}
    +
    \frac{2G_{\rm I}}{\sqrt{M}}.
\end{align*}
Thus, by choosing $M$ and $T$ as in part (a) of the theorem, we have
\begin{align*}
    \E[\norm{\bar \vg_{k_{\rm out}}}]
    \le
    \epsilon.
\end{align*}
Since each iteration uses two function-value queries,
\begin{align*}
    N_{\rm SZO}
    =
    2T.
\end{align*}
Using the choice of $T$ and the bound $M=\mathcal{O}(G_{\rm I}^2\epsilon^{-2})$, we obtain
\begin{align*}
    N_{\rm SZO}
    &=
    2T \\
    &=
    \mathcal{O}
    \left(
        \frac{(\gamma+\delta L)G_{\rm I}^2}{\delta\epsilon^3}
    \right) \\
    &=
    \mathcal{O}
    \left(
        \frac{\gamma+\delta L}{\delta\epsilon^3}
        \left(
            \frac{d^2\sigma^2}{\delta^2}
            +
            dL^2
        \right)
    \right) \\
    &=
    \mathcal{O}
    \left(
        d^2\gamma \sigma^2\delta^{-3}\epsilon^{-3}
        +
        d^2 L\sigma^2\delta^{-2}\epsilon^{-3}
        +
        dL^2\gamma\delta^{-1}\epsilon^{-3}
        +
        dL^3\epsilon^{-3}
    \right).
\end{align*}
To obtain a $(\delta,\epsilon)$-Goldstein stationary point, set $\widehat{\delta} \triangleq \delta/2$ and run Algorithm \ref{alg:O2NC-detail} with smoothing radius $\widehat{\delta}$. Since the above bound is expressed in $\mathcal{O}(\cdot)$ notation, replacing $\delta$ by $\widehat{\delta}$ changes only absolute constants. Define
\begin{align*}
    \widehat{\vg}_{k_{\rm out}}
    \triangleq
    \frac{1}{M}
    \sum_{t=(k_{\rm out}-1)M+1}^{k_{\rm out}M}
    \nabla f_{\widehat{\delta}}(\vy_t).
\end{align*}
Then Theorem \ref{thm:O2NC-generic} yields $\E[\norm{\widehat{\vg}_{k_{\rm out}}}] \le \epsilon$. Moreover, for every $t$ in the output block,
\begin{align*}
    \norm{\bar \vy_{k_{\rm out}}-\vy_t}
    \le
    \frac{1}{M}\sum_{s=(k_{\rm out}-1)M+1}^{k_{\rm out}M}
    \norm{\vy_s-\vy_t}
    \le
    \widehat{\delta}
    =
    \frac{\delta}{2}.
\end{align*}
Therefore, Proposition \ref{prop:f-delta-smooth}(e) implies $\nabla f_{\widehat{\delta}}(\vy_t)\in \partial_{\widehat{\delta}} f(\vy_t)\subseteq \partial_{\delta}f(\bar \vy_{k_{\rm out}})$ for every $t$ in the output block, and hence
\begin{align*}
    \widehat{\vg}_{k_{\rm out}}
    =
    \frac{1}{M}\sum_{t=(k_{\rm out}-1)M+1}^{k_{\rm out}M}\nabla f_{\widehat{\delta}}(\vy_t)
    \in
    \partial_{\delta}f(\bar \vy_{k_{\rm out}}).
\end{align*}
Thus,
\begin{align*}
    \E\big[
        {\rm dist}(\vzero,\partial_{\delta}f(\bar \vy_{k_{\rm out}}))
    \big]
    \le
    \E[\norm{\widehat{\vg}_{k_{\rm out}}}]
    \le
    \epsilon.
\end{align*}

\textbf{Option II.}
By Lemma \ref{lem:one-point-o2nc-moment},
\begin{align*}
    \E[\norm{\vg_t}^2]
    \le
    G_{\rm II}^2
    =
    \frac{6d^2\sigma^2}{\delta^2}
    +
    \aireplace{385d^2L^2}{42d^2L^2}.
\end{align*}
Applying Theorem \ref{thm:O2NC-generic} with $G=G_{\rm II}$ yields
\begin{align*}
    \E[\norm{\bar \vg_{k_{\rm out}}}]
    \le
    \frac{M(\gamma+\delta L)}{\delta T}
    +
    \frac{2G_{\rm II}}{\sqrt{M}}.
\end{align*}
Thus, by choosing $M$ and $T$ as in part (b) of the theorem, we have
\begin{align*}
    \E[\norm{\bar \vg_{k_{\rm out}}}]
    \le
    \epsilon.
\end{align*}
Since Option II uses one initialization query and one new query per iteration,
\begin{align*}
    N_{\rm SZO}
    =
    T+1.
\end{align*}
Using the choice of $T$ and the bound $M=\mathcal{O}(G_{\rm II}^2\epsilon^{-2})$, we obtain
\begin{align*}
    N_{\rm SZO}
    &=
    T+1 \\
    &=
    \mathcal{O}
    \left(
        \frac{(\gamma+\delta L)G_{\rm II}^2}{\delta\epsilon^3}
    \right) \\
    &=
    \mathcal{O}
    \left(
        \frac{\gamma+\delta L}{\delta\epsilon^3}
        \left(
            \frac{d^2\sigma^2}{\delta^2}
            +
            d^2L^2
        \right)
    \right) \\
    &=
    \mathcal{O}
    \left(
        d^2\gamma \sigma^2\delta^{-3}\epsilon^{-3}
        +
        d^2 L\sigma^2\delta^{-2}\epsilon^{-3}
        +
        d^2L^2\gamma\delta^{-1}\epsilon^{-3}
        +
        d^2L^3\epsilon^{-3}
    \right).
\end{align*}
Set $\widehat{\delta} \triangleq \delta/2$ and run Algorithm~\ref{alg:O2NC-detail} with Option II and smoothing radius $\widehat{\delta}$. As above, this changes only absolute constants in the displayed $\mathcal{O}(\cdot)$ bound. Define
\begin{align*}
    \widehat{\vg}_{k_{\rm out}}
    \triangleq
    \frac{1}{M}
    \sum_{t=(k_{\rm out}-1)M+1}^{k_{\rm out}M}
    \nabla f_{\widehat{\delta}}(\vy_t).
\end{align*}
Then Theorem \ref{thm:O2NC-generic} yields $\E[\norm{\widehat{\vg}_{k_{\rm out}}}] \le \epsilon$. Moreover, for every $t$ in the output block,
\begin{align*}
    \norm{\bar \vy_{k_{\rm out}}-\vy_t}
    \le
    \frac{1}{M}\sum_{s=(k_{\rm out}-1)M+1}^{k_{\rm out}M}
    \norm{\vy_s-\vy_t}
    \le
    \widehat{\delta}
    =
    \frac{\delta}{2}.
\end{align*}
Therefore, Proposition \ref{prop:f-delta-smooth}(e) implies $\nabla f_{\widehat{\delta}}(\vy_t)\in \partial_{\widehat{\delta}} f(\vy_t)\subseteq \partial_{\delta}f(\bar \vy_{k_{\rm out}})$ for every $t$ in the output block, and hence
\begin{align*}
    \widehat{\vg}_{k_{\rm out}}
    =
    \frac{1}{M}\sum_{t=(k_{\rm out}-1)M+1}^{k_{\rm out}M}\nabla f_{\widehat{\delta}}(\vy_t)
    \in
    \partial_{\delta}f(\bar \vy_{k_{\rm out}}).
\end{align*}
Thus,
\begin{align*}
    \E\big[
        {\rm dist}(\vzero,\partial_{\delta}f(\bar \vy_{k_{\rm out}}))
    \big]
    \le
    \E[\norm{\widehat{\vg}_{k_{\rm out}}}]
    \le
    \epsilon.
\end{align*}
\end{proof}

\section{Detailed Pseudo-code of ZO-O2NC with Option II}
\label{app:option-ii-alg}
For completeness, Algorithm~\ref{alg:O2NC-onepoint} gives a detailed presentation of ZO-O2NC with Option II. It uses one initialization query to construct the previous feedback value and then one fresh stochastic zeroth-order query per iteration.
\begin{algorithm}[htbp]
\caption{ZO-O2NC with Option II}\label{alg:O2NC-onepoint}
\renewcommand{\algorithmicrequire}{\textbf{Input:}}
\renewcommand{\algorithmicensure}{\textbf{Output:}}
\begin{algorithmic}[1]
\Require smoothing radius $\delta>0$, block length $M \in \sN$, number of blocks $K \in \sN$, stepsize $\eta>0$
\State $D=\delta/M$, $T=KM$, $\vx_0=\vy_0=\vzero$
\State Sample $\vu_0\sim{\rm Unif}(\sS^{d-1})$, draw $\xi_0\sim\Xi(\vy_0+\delta\vu_0)$, and query $h_0=F(\vy_0+\delta\vu_0;\xi_0)$
\For{$k=1,\ldots,K$}
\State $\vDelta_{(k-1)M+1}=\vzero$
\For{$m=1,\ldots,M$}
\State $t=(k-1)M+m$
\State Sample $s_t \sim {\rm Unif}[0,1]$
\State $\vy_t=\vx_{t-1}+s_t\vDelta_t$
\State $\vx_t=\vx_{t-1}+\vDelta_t$
\State Sample $\vu_t\sim{\rm Unif}(\sS^{d-1})$, draw $\xi_t\sim\Xi(\vy_t+\delta\vu_t)$, and query $h_t=F(\vy_t+\delta\vu_t;\xi_t)$
\State $\vg_t=\frac{d}{\delta}(h_t-h_{t-1})\vu_t$
\State $\vDelta_{t+1}=\Pi_{\sB(\vzero,D)}(\vDelta_t-\eta \vg_t)$
\EndFor
\EndFor
\State Sample $k_{\rm out} \sim {\rm Unif}\{1,\ldots,K\}$ and return $\bar \vy_{k_{\rm out}}=\frac{1}{M}\sum_{t=(k_{\rm out}-1)M+1}^{k_{\rm out}M}\vy_t$
\end{algorithmic}
\end{algorithm}

\section{The Proof of Section~\ref{sec:smooth-extension}}
\label{app:proof-sec4}

\subsection{The Proof of Proposition \ref{prop:smooth-reduction}}
\label{app:proof-prop-smooth-reduction}
\begin{proof}
Define
\begin{align*}
    \bar{\vg}
    \triangleq
    \frac{1}{M}\sum_{i=1}^{M}\nabla f_\delta(\vy_i).
\end{align*}
Under Assumption \ref{asm:grad-lip}, differentiation under the expectation gives
\begin{align*}
    \nabla f_\delta(\bar{\vy})
    =
    \E_{\vu \sim {\rm Unif}(\sB^d)}[\nabla f(\bar{\vy}+\delta\vu)].
\end{align*}
Hence, for any $\vx,\vz \in \sR^d$,
\begin{align*}
    \norm{\nabla f_\delta(\vx)-\nabla f_\delta(\vz)}
    &\le
    \E\big[\norm{\nabla f(\vx+\delta\vu)-\nabla f(\vz+\delta\vu)}\big] \\
    &\le
    L_g\norm{\vx-\vz},
\end{align*}
so $\nabla f_\delta$ is $L_g$-Lipschitz continuous. Therefore,
\begin{align*}
    \norm{\nabla f_\delta(\bar{\vy})-\bar \vg}
    &\le
    \frac{1}{M}\sum_{t=1}^M
    \norm{\nabla f_\delta(\bar{\vy})-\nabla f_\delta(\vy_t)} \\
    &\le
    \frac{L_g}{M}\sum_{t=1}^M \norm{\bar{\vy}-\vy_t}
    \le
    L_g\delta.
\end{align*}
Therefore,
\begin{align*}
    \norm{\nabla f(\bar{\vy})-\nabla f_\delta(\bar{\vy})}
    &=
    \left\|
        \E_{\vu \sim {\rm Unif}(\sB^d)}
        \big[
            \nabla f(\bar{\vy})-\nabla f(\bar{\vy}+\delta\vu)
        \big]
    \right\| \\
    &\le
    \E\big[\norm{\nabla f(\bar{\vy})-\nabla f(\bar{\vy}+\delta\vu)}\big]
    \le
    L_g\delta \E[\norm{\vu}]
    \le
    L_g\delta,
\end{align*}
and the stated bound in part (a) follows from the triangle inequality.

Under Assumption \ref{asm:Hess-lip}, differentiation under the expectation yields
\begin{align*}
    \nabla^2 f_\delta(\bar{\vy})
    =
    \E_{\vu \sim {\rm Unif}(\sB^d)}[\nabla^2 f(\bar{\vy}+\delta\vu)].
\end{align*}
Hence, for any $\vx,\vz \in \sR^d$,
\begin{align*}
    \norm{\nabla^2 f_\delta(\vx)-\nabla^2 f_\delta(\vz)}
    &\le
    \E\big[
        \norm{\nabla^2 f(\vx+\delta\vu)-\nabla^2 f(\vz+\delta\vu)}
    \big] \\
    &\le
    L_H\norm{\vx-\vz},
\end{align*}
so $\nabla^2 f_\delta$ is $L_H$-Lipschitz continuous. Applying the second-order Taylor expansion to $\nabla f_\delta$ at $\bar{\vy}$ gives
\begin{align*}
    \nabla f_\delta(\vy_i)
    =
    \nabla f_\delta(\bar{\vy})
    +
    \nabla^2 f_\delta(\bar{\vy})(\vy_i-\bar{\vy})
    +
    \vr_i,
    \qquad
    \norm{\vr_i} \le \frac{L_H}{2}\norm{\vy_i-\bar{\vy}}^2.
\end{align*}
Averaging over $i$ and using $\frac{1}{M}\sum_{i=1}^M(\vy_i-\bar{\vy})=\vzero$, we obtain
\begin{align*}
    \bar \vg - \nabla f_\delta(\bar{\vy})
    =
    \frac{1}{M}\sum_{i=1}^M \vr_i,
\end{align*}
and therefore
\begin{align*}
    \norm{\nabla f_\delta(\bar{\vy})-\bar \vg}
    \le
    \frac{1}{M}\sum_{i=1}^M \norm{\vr_i}
    \le
    \frac{L_H}{2M}\sum_{i=1}^M \norm{\vy_i-\bar{\vy}}^2
    \le
    \frac{L_H\delta^2}{2}.
\end{align*}
Next, for every $\vu \in \sB^d$, the second-order Taylor expansion of $\nabla f$ at $\bar{\vy}$ gives
\begin{align*}
    \nabla f(\bar{\vy}+\delta\vu)
    =
    \nabla f(\bar{\vy})
    +
    \delta \nabla^2 f(\bar{\vy})\vu
    +
    \vq(\vu),
    \qquad
    \norm{\vq(\vu)} \le \frac{L_H\delta^2}{2}\norm{\vu}^2.
\end{align*}
Taking expectations over $\vu \sim {\rm Unif}(\sB^d)$ and using $\E[\vu]=\vzero$ and $\E[\norm{\vu}^2]\le 1$, we obtain
\begin{align*}
    \norm{\nabla f_\delta(\bar{\vy})-\nabla f(\bar{\vy})}
    =
    \norm{\E[\vq(\vu)]}
    \le
    \E[\norm{\vq(\vu)}]
    \le
    \frac{L_H\delta^2}{2}.
\end{align*}
The stated bound in part (b) again follows from the triangle inequality.
\end{proof}

\subsection{The Proof of Theorem \ref{thm:smooth-gradient}}
\label{app:proof-thm-smooth-gradient}
\begin{proof}
Fix either Option I or Option II. Run Algorithm~\ref{alg:O2NC-detail} with target accuracy $\epsilon/2$ in Theorem~\ref{thm:O2NC-NSNC}. For the selected output block, define
\begin{align*}
    \bar \vg_{k_{\rm out}}
    \triangleq
    \frac{1}{M}
    \sum_{t=(k_{\rm out}-1)M+1}^{k_{\rm out}M}
    \nabla f_\delta(\vy_t).
\end{align*}
By Theorem \ref{thm:O2NC-NSNC}, the output $\bar \vy_{k_{\rm out}}$ satisfies
\begin{align*}
    \E[\norm{\bar \vg_{k_{\rm out}}}] \le \frac{\epsilon}{2}.
\end{align*}
Moreover, the proof of Theorem \ref{thm:O2NC-NSNC} gives
\begin{align*}
    \norm{\vy_t-\bar \vy_{k_{\rm out}}} \le \delta,
    \qquad
    t=(k_{\rm out}-1)M+1,\ldots,k_{\rm out}M.
\end{align*}
Applying Proposition \ref{prop:smooth-reduction}(a) yields
\begin{align*}
    \E[\norm{\nabla f(\bar \vy_{k_{\rm out}})}]
    \le
    \E[\norm{\bar \vg_{k_{\rm out}}}] + 2L_g\delta
    \le
    \frac{\epsilon}{2}+2L_g\delta.
\end{align*}
With $\delta=\epsilon/(4L_g)$, the right-hand side is at most $\epsilon$. For Option I, Theorem \ref{thm:O2NC-NSNC} gives
\begin{align*}
    N_{\rm SZO}
    =
    \mathcal{O}
    \left(
        \frac{\gamma+\delta L}{\delta(\epsilon/2)^3}
        \left(
            \frac{d^2\sigma^2}{\delta^2}
            +
            dL^2
        \right)
    \right)
\end{align*}
and for Option II,
\begin{align*}
    N_{\rm SZO}
    =
    \mathcal{O}
    \left(
        \frac{\gamma+\delta L}{\delta(\epsilon/2)^3}
        \left(
            \frac{d^2\sigma^2}{\delta^2}
            +
            d^2L^2
        \right)
    \right).
\end{align*}
Substituting $\delta=\epsilon/(4L_g)$ shows that both bounds reduce to
\begin{align*}
    N_{\rm SZO}
    =
    \gO\left(d^2\epsilon^{-6}\right),
\end{align*}
where constants depending on $\gamma,\sigma,L,$ and $L_g$ are absorbed into the $\gO(\cdot)$ notation.
\end{proof}

\subsection{The Proof of Theorem \ref{thm:smooth-hessian}}
\label{app:proof-thm-smooth-hessian}
\begin{proof}
Fix either Option I or Option II. Run Algorithm~\ref{alg:O2NC-detail} with target accuracy $\epsilon/2$ in Theorem~\ref{thm:O2NC-NSNC}. For the selected output block, define
\begin{align*}
    \bar \vg_{k_{\rm out}}
    \triangleq
    \frac{1}{M}
    \sum_{t=(k_{\rm out}-1)M+1}^{k_{\rm out}M}
    \nabla f_\delta(\vy_t).
\end{align*}
By Theorem \ref{thm:O2NC-NSNC},
\begin{align*}
    \E[\norm{\bar \vg_{k_{\rm out}}}] \le \frac{\epsilon}{2},
    \qquad
    \norm{\vy_t-\bar \vy_{k_{\rm out}}} \le \delta
\end{align*}
for all $t$ in the selected block. Therefore, Proposition \ref{prop:smooth-reduction}(b) gives
\begin{align*}
    \E[\norm{\nabla f(\bar \vy_{k_{\rm out}})}]
    \le
    \E[\norm{\bar \vg_{k_{\rm out}}}] + L_H\delta^2
    \le
    \frac{\epsilon}{2}+L_H\delta^2.
\end{align*}
With $\delta=\sqrt{\epsilon/(2L_H)}$, the right-hand side is at most $\epsilon$. For Option I, Theorem \ref{thm:O2NC-NSNC} gives
\begin{align*}
    N_{\rm SZO}
    =
    \mathcal{O}
    \left(
        \frac{\gamma+\delta L}{\delta(\epsilon/2)^3}
        \left(
            \frac{d^2\sigma^2}{\delta^2}
            +
            dL^2
        \right)
    \right)
\end{align*}
and for Option II,
\begin{align*}
    N_{\rm SZO}
    =
    \mathcal{O}
    \left(
        \frac{\gamma+\delta L}{\delta(\epsilon/2)^3}
        \left(
            \frac{d^2\sigma^2}{\delta^2}
            +
            d^2L^2
        \right)
    \right).
\end{align*}
Substituting $\delta=\sqrt{\epsilon/(2L_H)}$ shows that both bounds reduce to
\begin{align*}
    N_{\rm SZO}
    =
    \gO\left(d^2\epsilon^{-9/2}\right),
\end{align*}
where constants depending on $\gamma,\sigma,L,$ and $L_H$ are absorbed into the $\gO(\cdot)$ notation.
\end{proof}

\section{Finding Goldstein Stationary Point by Stochastic Gradient Descent}
\label{app:sgd-baseline}
\citet{hikima2025zeroth} use mini-batch zeroth-order estimators together with stochastic gradient descent.
For completeness, we record the corresponding guarantee for finding a $(\delta,\epsilon)$-Goldstein stationary point by applying stochastic gradient descent directly to the smoothed surrogate $f_\delta$.

Let
\begin{align*}
    V_\delta
    \triangleq
    \frac{d^2\sigma^2}{2\delta^2}
    +
    \aireplace{16\sqrt{2\pi}dL^2}{2dL^2},
    \qquad
    \beta_\delta
    \triangleq
    \frac{c\sqrt{d}L}{\delta},
\end{align*}
where $c>0$ is the universal constant from Proposition \ref{prop:f-delta-smooth}. Then $f_\delta$ is $\beta_\delta$-smooth.

For a given iterate $\vx_t$ and mini-batch size $B \in \sN$, define the mini-batch two-point estimator by
\begin{align}
    \label{eq:mb-two-point-estimator}
    \widehat{\vg}_t
    \triangleq
    \frac{1}{B}\sum_{j=1}^B
    \vg_{\text{2pt}}(\vx_t,\vu_{t,j},\xi_{t,j}^{+},\xi_{t,j}^{-}),
\end{align}
where, conditionally on $\mathcal{F}_t$, the tuples
\[
(\vu_{t,j},\xi_{t,j}^{+},\xi_{t,j}^{-}),
\qquad
j=1,\ldots,B,
\]
are sampled independently, with
\[
\vu_{t,j} \sim {\rm Unif}(\sS^{d-1}),
\qquad
\xi_{t,j}^{+} \sim \Xi(\vx_t+\delta\vu_{t,j}),
\qquad
\xi_{t,j}^{-} \sim \Xi(\vx_t-\delta\vu_{t,j}).
\]

\begin{algorithm}[htbp]
\caption{Mini-batch Two-Point SGD on $f_\delta$}\label{alg:mb-sgd}
\renewcommand{\algorithmicrequire}{\textbf{Input:}}
\renewcommand{\algorithmicensure}{\textbf{Output:}}
\begin{algorithmic}[1]
\Require smoothing radius $\delta>0$, mini-batch size $B \in \sN$, number of iterations $T \in \sN$, stepsize $\eta>0$
\State $\vx_0=\vzero$
\For{$t=0,\ldots,T-1$}
\State Construct $\widehat{\vg}_t$ by \eqref{eq:mb-two-point-estimator}
\State $\vx_{t+1}=\vx_t-\eta \widehat{\vg}_t$
\EndFor
\State Sample $t_{\rm out}\sim {\rm Unif}\{0,\ldots,T-1\}$ and return $\vx_{t_{\rm out}}$
\end{algorithmic}
\end{algorithm}

\begin{lem}\label{lem:mb-two-point}
Suppose Assumptions \ref{asm:estimator} and \ref{asm:lip_function} hold. Then, for every $t \ge 0$,
\begin{align*}
    \E[\widehat{\vg}_t \mid \mathcal{F}_t]
    &=
    \nabla f_\delta(\vx_t),~~~
    \E\big[
        \norm{\widehat{\vg}_t-\nabla f_\delta(\vx_t)}^2
        \mid
        \mathcal{F}_t
    \big]\le
    \frac{V_\delta}{B},
\end{align*}
and consequently
\begin{align*}
    \E\big[
        \norm{\widehat{\vg}_t}^2
        \mid
        \mathcal{F}_t
    \big]
    \le
    \norm{\nabla f_\delta(\vx_t)}^2
    +
    \frac{V_\delta}{B}.
\end{align*}
\end{lem}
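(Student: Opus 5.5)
The plan is to reduce the mini-batch claims to the single-sample bounds already established in Lemma~\ref{lem:zo-var}, using conditional independence of the $B$ tuples given $\mathcal{F}_t$. The only adjustment is that the base point is now $\vx_t$, which is $\mathcal{F}_t$-measurable, rather than $\vy_t$ with respect to $\mathcal{F}_{t-1}$. Write $\vg_{t,j}\triangleq\vg_{\text{2pt}}(\vx_t,\vu_{t,j},\xi_{t,j}^{+},\xi_{t,j}^{-})$.

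\textbf{Step 1 (unbiasedness).} Fix $j$. The argument in the proof of Lemma~\ref{lem:zo-var} uses only two facts: the base point is measurable with respect to the conditioning $\sigma$-field, and $\vu_{t,j}$, $\xi^\pm_{t,j}$ are drawn fresh with the prescribed laws. Both hold here, so that argument gives $\E[\vg_{t,j}\mid\mathcal{F}_t]=\nabla f_\delta(\vx_t)$. Averaging over $j$ then gives $\E[\widehat{\vg}_t\mid\mathcal{F}_t]=\nabla f_\delta(\vx_t)$.

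\textbf{Step 2 (variance).} Set $\zeta_j\triangleq\vg_{t,j}-\nabla f_\delta(\vx_t)$. Conditionally on $\mathcal{F}_t$, the $\zeta_j$ are independent and mean zero, so the cross terms vanish and
\begin{align*}
\E\Big[\Big\|\frac{1}{B}\sum_{j=1}^B\zeta_j\Big\|^2\,\Big|\,\mathcal{F}_t\Big]=\frac{1}{B^2}\sum_{j=1}^B\E[\norm{\zeta_j}^2\mid\mathcal{F}_t].
\end{align*}
Each summand satisfies
\begin{align*}
\E[\norm{\zeta_j}^2\mid\mathcal{F}_t]=\E[\norm{\vg_{t,j}}^2\mid\mathcal{F}_t]-\norm{\nabla f_\delta(\vx_t)}^2\le\E[\norm{\vg_{t,j}}^2\mid\mathcal{F}_t].
\end{align*}
The conditional second-moment bound of Lemma~\ref{lem:zo-var}, applied at base point $\vx_t$, bounds the right-hand side by $\frac{d^2\sigma^2}{2\delta^2}+16\sqrt{2\pi}dL^2=V_\delta$. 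Summing the $B$ terms and dividing by $B^2$ gives the stated $V_\delta/B$.

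\textbf{Step 3 (second moment).} Expand $\widehat{\vg}_t=\nabla f_\delta(\vx_t)+(\widehat{\vg}_t-\nabla f_\delta(\vx_t))$. The cross term has zero conditional mean by Step 1, which yields $\E[\norm{\widehat{\vg}_t}^2\mid\mathcal{F}_t]=\norm{\nabla f_\delta(\vx_t)}^2+\E[\norm{\widehat{\vg}_t-\nabla f_\delta(\vx_t)}^2\mid\mathcal{F}_t]$. Step 2 finishes the bound.

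\textbf{Main obstacle.} The only delicate point is checking that Lemma~\ref{lem:zo-var} transfers verbatim to conditioning on $\mathcal{F}_t$ at $\vx_t$. Its proof conditions only on a $\sigma$-field making the base point measurable, and it uses Assumption~\ref{asm:estimator} pointwise in the query location. It also needs $\xi^\pm_{t,j}$ to be conditionally independent of one another given $\vu_{t,j}$, and I will state this explicitly as part of the sampling model. The remaining steps are routine.
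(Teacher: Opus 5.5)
Your proposal is correct and follows the paper's proof essentially step for step. You apply Lemma~\ref{lem:zo-var} at base point $\vx_t$ to get per-sample unbiasedness and the $V_\delta$ second-moment bound, use conditional independence of the $B$ tuples to kill cross terms and obtain $V_\delta/B$, and finish with the bias--variance decomposition. Your explicit requirement that $\xi^{\pm}_{t,j}$ be conditionally independent given $\vu_{t,j}$ is a fair clarification: the paper uses this implicitly in Lemma~\ref{lem:zo-var} when it drops the cross term between $Z^+$ and $Z^-$.
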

\begin{proof}
For each $j=1,\ldots,B$, let
\begin{align*}
    \widehat{\vg}_{t,j}
    \triangleq
    \vg_{\text{2pt}}(\vx_t,\vu_{t,j},\xi_{t,j}^{+},\xi_{t,j}^{-}).
\end{align*}
Then
\begin{align*}
    \widehat{\vg}_t
    =
    \frac{1}{B}\sum_{j=1}^B \widehat{\vg}_{t,j}.
\end{align*}
By Lemma \ref{lem:zo-var}, for every $j$,
\begin{align*}
    \E[\widehat{\vg}_{t,j}\mid \mathcal{F}_t]
    =
    \nabla f_\delta(\vx_t),
    \qquad
    \E[\norm{\widehat{\vg}_{t,j}}^2\mid \mathcal{F}_t]
    \le
    V_\delta.
\end{align*}
Therefore,
\begin{align*}
    \E[\widehat{\vg}_t\mid \mathcal{F}_t]
    =
    \frac{1}{B}\sum_{j=1}^B \E[\widehat{\vg}_{t,j}\mid \mathcal{F}_t]
    =
    \nabla f_\delta(\vx_t).
\end{align*}
Moreover, conditional independence gives
\begin{align*}
    \E\big[
        \norm{\widehat{\vg}_t-\nabla f_\delta(\vx_t)}^2
        \mid
        \mathcal{F}_t
    \big]
    &=
    \frac{1}{B^2}
    \sum_{j=1}^B
    \E\big[
        \norm{\widehat{\vg}_{t,j}-\nabla f_\delta(\vx_t)}^2
        \mid
        \mathcal{F}_t
    \big] \\
    &\le
    \frac{1}{B^2}
    \sum_{j=1}^B
    \E\big[
        \norm{\widehat{\vg}_{t,j}}^2
        \mid
        \mathcal{F}_t
    \big] \\
    &\le
    \frac{V_\delta}{B}.
\end{align*}
Finally,
\begin{align*}
    \E\big[
        \norm{\widehat{\vg}_t}^2
        \mid
        \mathcal{F}_t
    \big]
    &=
    \norm{\E[\widehat{\vg}_t\mid \mathcal{F}_t]}^2
    +
    \E\big[
        \norm{\widehat{\vg}_t-\E[\widehat{\vg}_t\mid \mathcal{F}_t]}^2
        \mid
        \mathcal{F}_t
    \big] \\
    &\le
    \norm{\nabla f_\delta(\vx_t)}^2
    +
    \frac{V_\delta}{B}.
\end{align*}
\end{proof}

\begin{thm}\label{thm:mb-sgd}
Suppose Assumptions \ref{asm:estimator}, \ref{asm:lower}, and \ref{asm:lip_function} hold, 
then Algorithm \ref{alg:mb-sgd} can find a $(\delta,\epsilon)$-Goldstein stationary point of $f$ using
\begin{align*}
    N_{\rm SZO}
    =
    \mathcal{O}
    \left(
        (\gamma+\delta L)
        \left(
            d^{5/2}L\sigma^2\delta^{-3}
            +
            d^{3/2}L^3\delta^{-1}
        \right)
        \epsilon^{-4}
    \right)
\end{align*}
stochastic zeroth-order oracle calls.
\end{thm}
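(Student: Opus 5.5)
The argument is the standard nonconvex SGD analysis applied to the smoothed surrogate $f_\delta$, which is $\beta_\delta$-smooth by Proposition~\ref{prop:f-delta-smooth}(d), combined with the unbiasedness and variance bound of Lemma~\ref{lem:mb-two-point}. The final step converts to Goldstein stationarity through Proposition~\ref{prop:f-delta-smooth}(e).

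\textbf{Descent step.} Using the descent lemma for $f_\delta$ with stepsize $\eta\le 1/\beta_\delta$,
\begin{align*}
    \E[f_\delta(\vx_{t+1})\mid\mathcal{F}_t]
    \le f_\delta(\vx_t)-\eta\norm{\nabla f_\delta(\vx_t)}^2+\frac{\beta_\delta\eta^2}{2}\Big(\norm{\nabla f_\delta(\vx_t)}^2+\frac{V_\delta}{B}\Big)
    \le f_\delta(\vx_t)-\frac{\eta}{2}\norm{\nabla f_\delta(\vx_t)}^2+\frac{\beta_\delta\eta^2V_\delta}{2B}.
\end{align*}

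\textbf{Telescoping.} Taking total expectations and summing over $t=0,\dots,T-1$ gives
\[
\frac1T\sum_{t}\E\norm{\nabla f_\delta(\vx_t)}^2\le \frac{2(\gamma+\delta L)}{\eta T}+\frac{\beta_\delta\eta V_\delta}{B}.
\]
Here $f_\delta(\vx_0)-\inf f_\delta\le\gamma+\delta L$ follows from Proposition~\ref{prop:f-delta-smooth}(a) (applied at $\vx_0$ and at the infimum, up to a factor $2$ absorbed into constants).

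\textbf{Parameter choice.} Take $\eta=1/\beta_\delta$ and $B=\lceil 2V_\delta/\epsilon^2\rceil$, so the noise term is at most $\epsilon^2/2$. Then take $T=\lceil 4\beta_\delta(\gamma+\delta L)/\epsilon^2\rceil$, so the first term is also at most $\epsilon^2/2$. By Jensen, $\E\norm{\nabla f_\delta(\vx_{t_{\rm out}})}\le\epsilon$. Since $\nabla f_\delta(\vx_{t_{\rm out}})\in\partial_\delta f(\vx_{t_{\rm out}})$, we get $\E[{\rm dist}(\vzero,\partial_\delta f(\vx_{t_{\rm out}}))]\le\epsilon$.

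\textbf{Counting queries.} The total number of SZO calls is
\[
2BT=\gO\Big(\frac{V_\delta}{\epsilon^2}\cdot\frac{\sqrt d L(\gamma+\delta L)}{\delta\epsilon^2}\Big).
\]
Substituting $V_\delta=\gO(d^2\sigma^2\delta^{-2}+dL^2)$ yields
\[
\gO\big((\gamma+\delta L)(d^{5/2}L\sigma^2\delta^{-3}+d^{3/2}L^3\delta^{-1})\epsilon^{-4}\big),
\]
which is the claimed bound.

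\textbf{Main obstacle.} The only delicate point is the variance term. We need $B$ large enough that $V_\delta/B\lesssim\epsilon^2$ so that $\eta$ can be taken as large as $1/\beta_\delta$. If instead a small batch were used with a tuned stepsize, the product $\beta_\delta V_\delta$ would give the same $\epsilon^{-4}$ rate. Either route works, but fixing $\eta=1/\beta_\delta$ keeps the bookkeeping clean.
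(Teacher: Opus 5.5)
Your proposal is correct and matches the paper's proof step for step. Both use the descent lemma for the $\beta_\delta$-smooth surrogate together with Lemma~\ref{lem:mb-two-point}, then telescoping, the same choices $\eta=1/\beta_\delta$, $B=\lceil 2V_\delta/\epsilon^2\rceil$, $T=\lceil 4\beta_\delta(\gamma+\delta L)/\epsilon^2\rceil$, Jensen, Proposition~\ref{prop:f-delta-smooth}(e), and the count $2BT$.

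One small note: the factor-$2$ hedge is unnecessary. Since $f_\delta(\vx)=\E[f(\vx+\delta\vu)]\ge f^*$ for every $\vx$, you get $f_\delta(\vx_0)-\inf f_\delta\le f(\vx_0)+L\delta-f^*=\gamma+\delta L$ exactly.
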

\begin{proof}
Since $f_\delta$ is $\beta_\delta$-smooth, for every $t$,
\begin{align*}
    f_\delta(\vx_{t+1})
    \le
    f_\delta(\vx_t)
    +
    \inner{\nabla f_\delta(\vx_t)}{\vx_{t+1}-\vx_t}
    +
    \frac{\beta_\delta}{2}\norm{\vx_{t+1}-\vx_t}^2.
\end{align*}
Substituting $\vx_{t+1}=\vx_t-\eta \widehat{\vg}_t$ gives
\begin{align*}
    f_\delta(\vx_{t+1})
    \le
    f_\delta(\vx_t)
    -
    \eta \inner{\nabla f_\delta(\vx_t)}{\widehat{\vg}_t}
    +
    \frac{\beta_\delta\eta^2}{2}
    \norm{\widehat{\vg}_t}^2.
\end{align*}
Taking conditional expectation with respect to $\mathcal{F}_t$ and using Lemma \ref{lem:mb-two-point}, we obtain
\begin{align*}
    \E[f_\delta(\vx_{t+1})\mid \mathcal{F}_t]
    &\le
    f_\delta(\vx_t)
    -
    \eta \norm{\nabla f_\delta(\vx_t)}^2 
    +
    \frac{\beta_\delta\eta^2}{2}
    \left(
        \norm{\nabla f_\delta(\vx_t)}^2
        +
        \frac{V_\delta}{B}
    \right) \\
    &=
    f_\delta(\vx_t)
    -
    \eta
    \left(
        1-\frac{\beta_\delta\eta}{2}
    \right)
    \norm{\nabla f_\delta(\vx_t)}^2
    +
    \frac{\beta_\delta\eta^2V_\delta}{2B}.
\end{align*}
Since $\eta \le 1/\beta_\delta$, we have $1-\beta_\delta\eta/2 \ge 1/2$, and hence
\begin{align*}
    \E[f_\delta(\vx_{t+1})\mid \mathcal{F}_t]
    \le
    f_\delta(\vx_t)
    -
    \frac{\eta}{2}
    \norm{\nabla f_\delta(\vx_t)}^2
    +
    \frac{\beta_\delta\eta^2V_\delta}{2B}.
\end{align*}
Taking total expectation and summing over $t=0,\ldots,T-1$ yield
\begin{align*}
    \frac{\eta}{2}
    \sum_{t=0}^{T-1}
    \E\big[
        \norm{\nabla f_\delta(\vx_t)}^2
    \big]
    \le
    f_\delta(\vx_0)-f_\delta^*
    +
    \frac{\beta_\delta\eta^2TV_\delta}{2B}.
\end{align*}
By Proposition \ref{prop:f-delta-smooth}(a) and Assumption \ref{asm:lower},
\begin{align*}
    f_\delta(\vx_0)-f_\delta^*
    \le
    f(\vx_0)-f^*+L\delta
    \le
    \gamma+\delta L.
\end{align*}
Therefore,
\begin{align*}
    \frac{1}{T}
    \sum_{t=0}^{T-1}
    \E\big[
        \norm{\nabla f_\delta(\vx_t)}^2
    \big]
    \le
    \frac{2(\gamma+\delta L)}{\eta T}
    +
    \frac{\beta_\delta\eta V_\delta}{B}.
\end{align*}
Since $t_{\rm out}$ is sampled uniformly from $\{0,\ldots,T-1\}$, this implies
\begin{align*}
    \E\big[
        \norm{\nabla f_\delta(\vx_{t_{\rm out}})}^2
    \big]
    \le
    \frac{2(\gamma+\delta L)}{\eta T}
    +
    \frac{\beta_\delta\eta V_\delta}{B}.
\end{align*}
Now choose
\begin{align*}
    \eta
    =
    \frac{1}{\beta_\delta},
    \qquad
    B
    =
    \left\lceil
        \frac{2V_\delta}{\epsilon^2}
    \right\rceil,
    \qquad
    T
    =
    \left\lceil
        \frac{4\beta_\delta(\gamma+\delta L)}{\epsilon^2}
    \right\rceil.
\end{align*}
Then
\begin{align*}
    \frac{2(\gamma+\delta L)}{\eta T}
    \le
    \frac{\epsilon^2}{2},
    \qquad
    \frac{\beta_\delta\eta V_\delta}{B}
    =
    \frac{V_\delta}{B}
    \le
    \frac{\epsilon^2}{2},
\end{align*}
and thus
\begin{align*}
    \E\big[
        \norm{\nabla f_\delta(\vx_{t_{\rm out}})}^2
    \big]
    \le
    \epsilon^2.
\end{align*}
By Proposition \ref{prop:f-delta-smooth}(e),
\begin{align*}
    \nabla f_\delta(\vx_{t_{\rm out}})
    \in
    \partial_\delta f(\vx_{t_{\rm out}}),
\end{align*}
and therefore
\begin{align*}
    {\rm dist}(\vzero,\partial_\delta f(\vx_{t_{\rm out}}))
    \le
    \norm{\nabla f_\delta(\vx_{t_{\rm out}})}.
\end{align*}
Taking expectations and applying Jensen's inequality give
\begin{align*}
    \E\big[
        {\rm dist}(\vzero,\partial_\delta f(\vx_{t_{\rm out}}))
    \big]
    \le
    \E\big[
        \norm{\nabla f_\delta(\vx_{t_{\rm out}})}
    \big]
    \le
    \left(
        \E\big[
            \norm{\nabla f_\delta(\vx_{t_{\rm out}})}^2
        \big]
    \right)^{1/2}
    \le
    \epsilon.
\end{align*}
Finally, since each iteration uses $2B$ stochastic zeroth-order oracle calls,
\begin{align*}
    N_{\rm SZO}
    =
    2BT
    =
    \mathcal{O}
    \left(
        \frac{(\gamma+\delta L)\beta_\delta V_\delta}{\epsilon^4}
    \right).
\end{align*}
Substituting
\[
\beta_\delta = \frac{c\sqrt{d}L}{\delta},
\qquad
V_\delta = \frac{d^2\sigma^2}{2\delta^2}+\aireplace{16\sqrt{2\pi}dL^2}{2dL^2}
\]
yields
\begin{align*}
    N_{\rm SZO}
    =
    \mathcal{O}
    \left(
        (\gamma+\delta L)
        \left(
            d^{5/2}L\sigma^2\delta^{-3}
            +
            d^{3/2}L^3\delta^{-1}
    \right)
    \epsilon^{-4}
    \right),
\end{align*}
as claimed.
\end{proof}